\documentclass[11pt]{amsart}
\hfuzz = 10pt

\usepackage{verbatim, graphicx, rotating}
\usepackage{mdwlist}
\usepackage{enumerate}
\usepackage{url}
\usepackage{amsthm}
\usepackage{hyperref}

\addtolength{\textheight}{1.4\topmargin}
\addtolength{\textwidth}{.7\oddsidemargin}
\addtolength{\textwidth}{.7\evensidemargin}
\setlength{\topmargin}{.3\topmargin}
\setlength{\oddsidemargin}{.3\oddsidemargin}
\setlength{\evensidemargin}{.3\evensidemargin}
\setlength{\parindent}{0in}

\parskip = 3.0pt

\usepackage {amssymb}

\input xy
\xyoption{all}
\input epsf


\newlength{\tabwidth}
\newlength{\tabheight}
\setlength{\tabwidth}{2ex}
\setlength{\tabheight}{2ex}
\newlength{\tabrule}
\newlength{\tabwidthx}
\newlength{\tabheightx}

\def\gentabbox#1#2#3#4{\vbox to \tabheight{\setlength{\tabrule}{#3}%
  \setlength{\tabwidthx}{#1\tabwidth}\addtolength{\tabwidthx}{\tabrule}%

\setlength{\tabheightx}{#2\tabheight}\addtolength{\tabheightx}{-\tabheight}%
  \hbox to #1\tabwidth{%
    \hspace{-0.5\tabrule}\rule{\tabrule}{#2\tabheight}\hspace{-\tabrule}%
    \vbox to #2\tabheight{\hsize=\tabwidthx%
      \vspace{-0.5\tabrule}\hrule width\tabwidthx height\tabrule%
      \vspace{-0.5\tabrule}\vfil%
      \hbox to \tabwidthx{\hss#4\hss}%
        \vfil\vspace{-0.5\tabrule}%
      \hrule width\tabwidthx height\tabrule\vspace{-0.5\tabrule}}%
    \hspace{-\tabrule}\rule{\tabrule}{#2\tabheight}\hspace{-0.5\tabrule}}%
  \vspace{-\tabheightx}}}
\def\genblankbox#1#2{\vbox to \tabheight{\vfil\hbox to
#1\tabwidth{\hfil}}}
\def\tabbox#1#2#3{\gentabbox{#1}{#2}{0.4pt}{\strut #3}}

\catcode`\:=13 \catcode`\.=13 \catcode`\;=13 
\catcode`\>=13 \catcode`\^=13
\def:#1\\{\hbox{$#1$}}
\def.#1{\tabbox{1}{1}{$#1$}}
\def>#1{\tabbox{2}{1}{$#1$}}
\def^#1{\tabbox{1}{2}{$#1$}}
\def;{\genblankbox{1}{1}\relax}
\catcode`\:=12 \catcode`\.=12 \catcode`\;=12 
\catcode`\>=12 \catcode`\^=7







\newcommand{\field}{\mathbb}
\newcommand{\liealgebra}{\mathfrak}
\newcommand{\la}{\liealgebra}


\newcommand{\C}{{\field C}}
\newcommand{\R}{{\field R}}
\newcommand{\Z}{{\field Z}}
\newcommand{\N}{{\field N}}


\renewcommand{\b}{\liealgebra b}

\newcommand{\n}{{\la n}}

\newcommand{\ga}{\alpha}






\newtheorem{prop}{Proposition}[section]

\newtheorem{lemma}[prop]{Lemma}
\newtheorem{theorem}[prop]{Theorem}

\theoremstyle{definition}

\newtheorem{remark}[prop]{Remark}
\newtheorem{example}[prop]{Example}

\newtheorem*{question}{Question}

\newtheorem{definition}[prop]{Definition}

\newcommand{\frt}{\mathfrak{t}}

\newcommand{\bbN}{\mathbb{N}}

\newcommand{\bbP}{\mathbb{P}}

\begin{document}
\title[$K$-orbits on $G/B$ and Schubert structure constants]
{$GL(p,\C) \times GL(q,\C)$-orbit closures on the flag variety and Schubert structure constants for $(p,q)$-pairs}

\author{Benjamin J. Wyser}
\date{\today}


\begin{abstract}
We give positive combinatorial descriptions of Schubert structure constants $c_{u,v}^w$ for the full flag variety in type $A_{n-1}$ when $u$ and $v$ form what we refer to as a ``$(p,q)$-pair" ($p+q=n$).  The key observation is that a certain subset of the $GL(p,\C) \times GL(q,\C)$-orbit closures on the flag variety (those satisfying an easily stated pattern avoidance condition) are Richardson varieties.  The result on structure constants follows when one combines this observation with a theorem of Brion concerning intersection numbers of spherical subgroup orbit closures and Schubert varieties.
\end{abstract}

\maketitle

\section{Introduction}
Let $G = GL(n,\C)$, with $B,B^- \subseteq G$ the Borel subgroups consisting of upper-triangular and lower-triangular matrices, respectively.  For each permutation $w \in S_n$, there exists a \textit{Schubert class} $S_w = [\overline{B^-wB/B}] \in H^*(G/B)$.  It is well-known that the classes $\{S_w\}_{w \in S_n}$ form a $\Z$-basis for $H^*(G/B)$.  As such, for any $u,v \in S_n$, we have
\[ S_u \cdot S_v = \displaystyle\sum_{w \in S_n} c_{u,v}^w S_w \]
in $H^*(G/B)$, for uniquely determined integers $c_{u,v}^w$.  These integers are the \textit{Schubert structure constants}.

The structure constants are known to be non-negative for geometric reasons, and are readily computable.  However, it is a long-standing open problem to give a \textit{positive} (i.e. subtraction-free) formula for $c_{u,v}^w$ in terms of the permutations $u$, $v$, and $w$.

There are numerous partial results which give positive formulas for structure constants $c_{u,v}^w$ in special cases.  Perhaps most notably, when $u,v$ are ``Grassmannian" permutations (each having a unique descent in the same place), the Schubert classes $S_u,S_v \in H^*(G/B)$ are pulled back from Schubert classes in the cohomology of a Grassmannian, and their products are determined by the classical Littlewood-Richardson rule, or by the equivalent ``puzzle rule" of \cite{Knutson-Tao-Woodward-04}.  Other examples include
\begin{itemize}
	\item Monk's rule (\cite{Monk-59}), which describes structure constants $c_{u,s_i}^w$ with $u \in S_n$ any permutation, and $s_i = (i,i+1)$ a simple transposition;
	\item An analogue of Pieri's rule for Grassmannians, which generalizes Monk's rule.  The formula determines $c_{u,v}^w$ when $u \in S_n$ is any permutation, and $v$ is a Grassmannian permutation of a certain ``shape" (\cite{Sottile-96});
	\item A rule due to M. Kogan (\cite{Kogan-01}) which describes $c_{u,v}^w$ when $u$ is a Grassmannian permutation with unique descent at $k$, and $v$ is a permutation all of whose descents are in positions at most $k$.  (Note that this generalizes the Littlewood-Richardson rule mentioned above.)
	\item A rule due to I. Coskun (\cite{Coskun-09}), which gives a positive description of structure constants in the cohomology ring of a two-step flag variety in terms of ``Mondrian tableaux".  (A manuscript on an extension of this rule to arbitrary partial flag varieties, currently available on I. Coskun's webpage, is described there as ``under revision"\footnote{Per \url{http://www.math.uic.edu/~coskun/}, as of August 31, 2011, the paper is described as follows:  ``Currently under revision. This is a preliminary version of a Littlewood-Richardson rule for arbitrary partial flag varieties. Any comments, corrections and suggestions are welcome."}.)
\end{itemize}

The above does not purport to be a comprehensive list of everything that is currently known about Schubert structure constants, but simply to give some indication of the types of cases currently understood.

The main result of this paper is a rule which gives a positive description of structure constants $c_{u,v}^w$ in another special case --- namely, the case where $u,v$ are what we call a \textit{$(p,q)$-pair} ($p+q=n$).  (See Definition \ref{def:pq_pair}.)  The rule is multiplicity-free, meaning that all such $c_{u,v}^w$ are either $0$ or $1$.  The case $0$ or $1$ is detected by computing a monoidal action of $w$ on the ``$(p,q)$-clan" associated to the permutations $u$ and $v$.  This boils down, in the end, to an elementary combinatorial check.  The statement of the rule is the content of Theorem \ref{thm:structure_constants}, the main result of this paper.

The significance of $(p,q)$-clans is that they parametrize the orbits of $GL(p,\C) \times GL(q,\C)$ on $G/B$.  In fact, it was the author's study of these orbits and their closures which led to the discovery of the aforementioned rule.  The key observation, which is the content of Theorem \ref{thm:k-orbits-richardson-varieties}, is that the closures of a number of these orbits (namely, those whose clans satisfy an easily stated pattern avoidance condition) are actually \textit{Richardson varieties}:  intersections of Schubert varieties with opposite Schubert varieties.  The rule for structure constants follows when one combines this observation with a theorem of Brion (Theorem \ref{thm:brion}), which describes the class of such an orbit closure as a sum of Schubert cycles in terms of paths in the weak order graph.  (See Section \ref{sect:weak_order}.)

$GL(p,\C) \times GL(q,\C)$ is an example of what is known as a ``symmetric subgroup" of $GL(n,\C)$:
\begin{definition}
Let $G$ be a complex reductive algebraic group, with $\theta$ an involution of $G$ (i.e. an automorphism whose square is the identity).  The fixed point subgroup $G^{\theta}$ is called a \textbf{symmetric subgroup}.
\end{definition}    
Such a subgroup is typically denoted $K$.  It is a fact (\cite{Matsuki-79}) that any symmetric subgroup $K$ acts on the flag variety for $G$ with finitely many orbits.  The geometry of these orbits and their closures plays a central role in the theory of Harish-Chandra modules for a certain real form $G_{\R}$ of $G$ --- namely, the real form containing a maximal compact subgroup $K_{\R}$ whose complexification is $K$.  (In the case of $GL(p,\C) \times GL(q,\C)$, the corresponding real form of $GL(n,\C)$ is $U(p,q)$.)  As such, it seems possible that Theorem \ref{thm:k-orbits-richardson-varieties} could also be of representation-theoretic interest.

The results of this paper can be pushed a bit farther to obtain similar rules for structure constants in types $C$ and $D$.  We briefly describe this.  Let $G' = Sp(2n,\C)$ or $SO(2n,\C)$, and let $X'$ be the flag variety for $G'$.  When $K = GL(n,\C) \times GL(n,\C) \subseteq GL(2n,\C)$ is intersected with $G'$, the result is a symmetric subgroup of $G'$, isomorphic to $K'=GL(n,\C)$ in each case.  Moreover, the intersection of any $K$-orbit with $X'$, if non-empty, is a single $K'$-orbit on $X'$.  In particular, intersecting any $K$-orbit closure which coincides with a Richardson variety with $X'$ gives a $K'$-orbit closure on $X'$ which is a Richardson variety in $X'$.  Because Brion's theorem applies to the class of \textit{any} symmetric subgroup orbit closure (better yet, to the class of any \textit{spherical} subgroup orbit closure) in \textit{any} flag variety, one may apply it again in these settings to obtain analogous rules for structure constants in types $CD$.  For more details, see \cite{Wyser-11b}.

We now describe the organization of this paper.  Section 1 is the introduction.  In Section 2, we cover some preliminaries, primarily concerning basic facts and definitions related to Schubert varieties, opposite Schubert varieties, and Richardson varieties, which we will use throughout the paper.  Section 3 gives a detailed description of the full and weak Bruhat orders on symmetric subgroup orbit closures, and includes the statement of Brion's theorem.  In Sections 4 and 5, we focus attention on the combinatorics of our main example, $K = GL(p,\C) \times GL(q,\C)$.  The material of Section 4 is a summary of known results gathered from other references, while the material of Section 5 is mostly new.  In Section 6, we use the combinatorics developed in the previous two sections to prove that certain of the $K$-orbit closures coincide with Richardson varieties.  In Section 7, we use this observation, along with Theorem \ref{thm:brion}, to prove our main result on structure constants.  Finally, we conclude by posing a natural question in Section 8 --- namely, are there other spherical subgroups of $GL(n,\C)$ some of whose orbit closures happen to coincide with Richardson varieties?

The author thanks William A. Graham, his research advisor at the University of Georgia, for his guidance throughout the author's doctoral thesis project, which led to the discovery of the results presented here.  The author further thanks Professor Graham for his assistance and advice in reading and editing early versions of this manuscript.  We also thank Allen Knutson and Alexander Yong for helpful email exchanges.

\section{Preliminaries}\label{sect:prelims}
\subsection{Notation}
We first establish some standard notation:

$[n]$ shall denote the set $\{1,\hdots,n\}$.

The long element of a Weyl group $W$ will always be denoted $w_0$.  When $W=S_n$, $w_0$ is the permutation $n\ n-1\ \hdots \ 2\ 1$.

We denote the variety of complete flags on $\C^n$ by $Fl(\C^n)$.  A complete flag $F_{\bullet}$ is a filtration
\[ F_0 \subset F_1 \subset \hdots \subset F_{n-1} \subset F_n \]
with $\dim(F_i) = i$ for $i=0,1,\hdots,n$.  (In particular, $F_0 = \{0\}$, and $F_n = \C^n$.)  In the notation of the introduction, $Fl(\C^n)$ is isomorphic to $G/B$.

For each $i \in [n]$, denote by $E_i$ the linear span $\C \left\langle e_1,\hdots,e_i \right\rangle$ of the \textit{first} $i$ standard basis vectors, and by $\widetilde{E_i}$ the linear span $\C \left\langle e_n,e_{n-1},\hdots,e_{n-i+1} \right\rangle$ of the \textit{last} $i$ standard basis vectors.

\subsection{Permutations and the Bruhat order}
For any permutation $w \in S_n$, and for each $(i,j) \in [n] \times [n]$, define
\[ r_w(i,j) = \#\{k \leq i \ \vert \ w(k) \leq j\}. \]

\begin{definition}
We refer to the matrix $(e_{i,j}) = (r_w(i,j))$ as the \textbf{rank matrix} for the permutation $w$.
\end{definition}

If one imagines a rank matrix to have a ``$0$th row" consisting only of $0$'s, rank matrices for permutations have the property that for each $i \in [n]$, row $i$ matches row $i-1$ until a certain point $n_i$ at which there is a ``jump" --- i.e. $r_w(i,n_i) = r_w(i-1,n_i) + 1$.  Beyond $n_i$, all entries in row $i$ are $1$ greater than the corresponding entries in row $i-1$.  Moreover, the ``jumps" $n_i$ are clearly just the values of the permutation:  $n_i = w(i)$.  Thus a permutation is completely determined by its rank matrix.

We give two equivalent definitions of the Bruhat order on $S_n$.  We will make use of both definitions later.  That these two definitions are equivalent to each other (and to the ``usual" definition of the Bruhat order) is standard --- see \cite{Deodhar-77} or \cite[\S 10.5]{Fulton-YoungTableaux}.
\begin{definition}\label{def:bruhat-1}
The \textbf{Bruhat order} on $S_n$ is the partial order defined as follows:  $u \leq v$ if and only if
\[ r_u(i,j) \geq r_v(i,j) \text{ for all } i,j. \]
\end{definition}

\begin{definition}\label{def:bruhat-2}
Here is an alternative definition of the Bruhat order on $S_n$:  $u \leq v$ if and only if for any $i \in [n]$, when $\{u(1),\hdots,u(i)\}$ and $\{v(1),\hdots,v(i)\}$ are each arranged in ascending order, each element of the first set is less than or equal to the corresponding element of the second set.
\end{definition}

\subsection{Schubert varieties, opposite Schubert varieties, and Richardson varieties}\label{sect:schubert_defs}
In this section, we recall some very basic definitions and facts regarding Schubert varieties, opposite Schubert varieties, and Richardson varieties.  The facts given here are all standard, and can be found in, e.g., \cite{Fulton-YoungTableaux} and/or \cite{Brion-05}.

Let $G$ be a reductive, complex algebraic group, with $B,B^- \subseteq G$ opposed Borel subgroups, and $T = B \cap B^-$ a maximal torus of $G$.  Let $W = N_G(T)/T$ be the Weyl group for $(G,T)$, and let $G/B$ be the flag variety.  By the ``type $A$ case", we will mean the following standard setup:
\begin{itemize}
	\item $G=GL(n,\C)$
	\item $B=$ upper-triangular matrices
	\item $B^-=$ lower-triangular matrices
	\item $T=$ diagonal matrices
	\item $W =S_n$
\end{itemize}
In this setup, $G/B \cong Fl(\C^n)$.

\begin{definition}\label{def:schubert_cells_varieties}
For each $w \in W$, the \textbf{Schubert cell} $C_w$ is defined to be $BwB/B$.  In the type $A$ case, for $w \in S_n$, we have
\[ C_w = \{ F_{\bullet} \in Fl(\C^n) \ \vert \ \dim(F_i \cap E_j) = r_w(i,j) \ \forall i,j\}. \]

The Zariski closure of the cell $C_w$ is a \textbf{Schubert variety}, and will be denoted $X_w$.  $X_w$ is an irreducible subvariety of $G/B$ of complex dimension $l(w)$.  
\end{definition}

Next, we define opposite Schubert cells and opposite Schubert varieties.  

\begin{definition}\label{def:opposite_cells_varieties}
For each $w \in W$, the \textbf{opposite Schubert cell} $C^w$ is defined to be $B^-wB/B$.  In the type $A$ case, for $w \in S_n$, we have
\[ C^w := \{ F_{\bullet} \in Fl(\C^n) \ \vert \ \dim(F_i \cap \widetilde{E_j}) = r_{w_0w}(i,j) \ \forall i,j\}. \]

The Zariski closure of the opposite cell $C^w$ is an \textbf{opposite Schubert variety}, and will be denoted $X^w$.  It is an irreducible subvariety of $G/B$ of complex \textit{codimension} $l(w)$.
\end{definition}

\begin{definition}
For each $w \in W$, the \textbf{Schubert class} $S_w$ is the (Poincar\'e dual to the) fundamental class of the opposite Schubert variety $X^w$.  That is, $S_w = [X^w]$.  Note that $S_w \in H^{2l(w)}(G/B)$.
\end{definition}

The fundamental classes of Schubert varieties and opposite Schubert varieties in $H^*(G/B)$ are related in the following way:
\begin{equation}\label{eqn:classes_of_schubert_and_opposite_schubert}
	[X^v] = [X_{w_0v}].
\end{equation}

We turn our attention now to intersections of Schubert varieties with opposite Schubert varieties.
\begin{prop}\label{prop:basic_richardson_varieties}
For $u,v \in W$, $X_u^v := X_u \cap X^v$ is non-empty if and only if $u \geq v$.  In this event, the intersection $X_u \cap X^v$ is proper and reduced, and has dimension $l(u) - l(v)$.  Moreover, $C_u^v := C_u \cap C^v$ is open and dense in $X_u^v$.
\end{prop}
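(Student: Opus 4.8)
\emph{Proof proposal.} The plan is to deduce every assertion from the fine structure of the intersections $C_x \cap C^y$ of a single Schubert cell with a single opposite Schubert cell. The key input --- which I would quote rather than reprove --- is Richardson's theorem (see also \cite{Brion-05}): for $x,y \in W$, the intersection $C_x \cap C^y$ is non-empty if and only if $y \leq x$, and when this holds it is an irreducible (in fact smooth) locally closed subvariety of $G/B$ of dimension $l(x) - l(y)$, with $\overline{C_x \cap C^y} = X_x \cap X^y$. To connect this with the proposition, recall the Bruhat cell decomposition $X_u = \bigsqcup_{x \leq u} C_x$ (see \cite{Fulton-YoungTableaux}), and note that its opposite analogue $X^v = \bigsqcup_{y \geq v} C^y$ follows from it, since $B^- = w_0 B w_0$ gives $C^v = w_0 \cdot C_{w_0 v}$ while $z \mapsto w_0 z$ reverses the Bruhat order. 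As every point of $G/B$ lies in exactly one Schubert cell and exactly one opposite Schubert cell, these two decompositions combine into the stratification
\[ X_u \cap X^v \;=\; \bigsqcup_{x \leq u,\, y \geq v} \bigl( C_x \cap C^y \bigr) \;=\; \bigsqcup_{v \leq y \leq x \leq u} \bigl( C_x \cap C^y \bigr), \]
the second equality using Richardson's non-emptiness criterion.

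The first two assertions now follow by inspection of this stratification. If $u \geq v$, the term with $x = u$ and $y = v$ occurs, so $C_u \cap C^v$ is non-empty and hence so is $X_u \cap X^v$; conversely, any point of $X_u \cap X^v$ lies in some stratum $C_x \cap C^y$ with $x \leq u$ and $y \geq v$, and non-emptiness of that stratum forces $y \leq x$, whence $v \leq y \leq x \leq u$. For the dimension, every occurring stratum has dimension $l(x) - l(y)$, and since $x \leq u$ forces $l(x) \leq l(u)$ with equality only for $x = u$, while $y \geq v$ forces $l(y) \geq l(v)$ with equality only for $y = v$, the unique stratum of top dimension is $C_u \cap C^v$, of dimension $l(u) - l(v)$; hence $\dim X_u^v = l(u) - l(v)$. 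For the density of $C_u^v$: the cell $C_u$ is the unique dense $B$-orbit in $X_u$, hence open in $X_u$, and likewise $C^v$ is open in $X^v$, so $C_u^v = C_u \cap C^v$ is open in $X_u^v$; and since $X_u^v = X_u \cap X^v$ is irreducible by Richardson's theorem, any non-empty open subset is dense.

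It remains to treat properness and reducedness. Properness is purely a codimension count: $\dim G/B = l(w_0)$ (as $X_{w_0} = G/B$), so $\operatorname{codim} X_u = l(w_0) - l(u)$ and $\operatorname{codim} X^v = l(v)$ (Definition \ref{def:opposite_cells_varieties}), while $\operatorname{codim} X_u^v = l(w_0) - \bigl( l(u) - l(v) \bigr) = \bigl( l(w_0) - l(u) \bigr) + l(v)$ is exactly the sum of the two, so the intersection is proper. For reducedness, the cleanest route is Frobenius splitting: $X_u$ and $X^v$ are compatibly split in a Frobenius splitting of $G/B$, so their scheme-theoretic intersection $X_u^v$ is reduced (\cite{Brion-05}); this reducedness is also part of Richardson's theorem and is in any case consistent with $C_u^v$ being a smooth dense open subset of $X_u^v$. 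The only genuinely non-formal ingredient in the whole argument is the structural statement about the cells $C_x \cap C^y$ --- their non-emptiness precisely for $y \leq x$, together with the irreducibility, dimension, and closure of the open Richardson variety --- and this is the step where I would simply cite Richardson and \cite{Brion-05}; a self-contained proof would require an explicit transversality or fibration analysis (e.g.\ via a Bott--Samelson-type resolution) that is more elaborate than the present context warrants.
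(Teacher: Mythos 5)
The paper does not actually prove this proposition: it appears in Section \ref{sect:schubert_defs} among facts declared ``standard,'' with a blanket citation to \cite{Fulton-YoungTableaux} and \cite{Brion-05}. Your write-up is a correct assembly of the standard argument, and every step checks out: the stratification $X_u \cap X^v = \bigsqcup_{v \le y \le x \le u} (C_x \cap C^y)$ is valid (the opposite decomposition $X^v = \bigsqcup_{y \ge v} C^y$ via $w_0$-translation is right), the length inequality correctly identifies $C_u \cap C^v$ as the unique stratum of top dimension $l(u)-l(v)$, openness of $C_u^v$ in $X_u^v$ follows from openness of each cell in its closure, the codimension count gives properness, and reducedness of the scheme-theoretic intersection is correctly attributed to compatible Frobenius splitting (or to Richardson's transversality at the cell level together with \cite{Brion-05}). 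The one honest caveat, which you acknowledge yourself, is that the ``key input'' you quote --- non-emptiness of $C_x \cap C^y$ exactly when $y \le x$, its irreducibility and dimension, and $\overline{C_x \cap C^y} = X_x \cap X^y$ --- already contains the heart of the proposition (the closure statement for $x=u$, $y=v$ is precisely the density claim), so your argument is best described as a careful reduction of the proposition to Richardson's theorem plus Bruhat decomposition rather than an independent proof. Since the paper itself offers only citations, this is entirely in keeping with its treatment, and nothing is missing relative to what the paper does; a genuinely self-contained proof would indeed require the transversality or Frobenius-splitting analysis you defer to the references.
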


\begin{definition}
For $u,v \in W$ with $u \geq v$, the intersection $X_u^v$ is called a \textbf{Richardson variety}.
\end{definition}

In light of equation (\ref{eqn:classes_of_schubert_and_opposite_schubert}) and Proposition \ref{prop:basic_richardson_varieties}, we have the following in $H^*(G/B)$:
\begin{equation}\label{eqn:classes_of_richardson_varieties}
[X_u^v] = [X_u] \cdot [X^v] = S_{w_0u} \cdot S_v.
\end{equation}

Finally, we state the following fact which relates the containment order on Richardson varieties (or the closure order on subsets $C_u^v$) to Bruhat intervals:
\begin{prop}\label{prop:richardson_closure_order}
For $u \geq v$, $u' \geq v'$, $X_{u'}^{v'} \subseteq X_u^v$ if and only if $u \geq u' \geq v' \geq v$.
\end{prop}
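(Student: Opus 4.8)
The plan is to deduce everything from Proposition~\ref{prop:basic_richardson_varieties}, which characterizes non-emptiness of $X_{u'} \cap X^{v'}$ by the Bruhat inequality $u' \geq v'$, together with the defining containments $C_w \subseteq X_w$, $C^w \subseteq X^w$ and the fact that $C_u^v$ is open and dense in $X_u^v$.

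First I would prove the ``only if'' direction. Assume $X_{u'}^{v'} \subseteq X_u^v$. Since $X_{u'}^{v'} \subseteq X_{u'}$ and $X_{u'}^{v'} \subseteq X_u^v \subseteq X_u$, and since $X_{u'}^{v'}$ is non-empty (as $u' \geq v'$), we get that $X_{u'} \cap X_u \neq \emptyset$ and $C_{u'}$ meets $X_u$; because $X_u$ is a union of Schubert cells $C_w$ with $w \leq u$ and $C_{u'}$ is irreducible and dense in $X_{u'}$, the generic point of $X_{u'}^{v'}$ lies in $C_{u'}$, forcing $C_{u'} \cap X_u \neq \emptyset$ and hence $u' \leq u$ by the standard description of Schubert variety containment $X_{u'} \subseteq X_u \iff u' \leq u$. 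Symmetrically, using $X_{u'}^{v'} \subseteq X^{v}$ and $X_{u'}^{v'} \ni$ a point of $C^{v'}$, together with $X^v = \bigcup_{w \geq v} C^w$, we get $v' \geq v$. Combined with the hypotheses $u' \geq v'$, this gives the chain $u \geq u' \geq v' \geq v$.

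For the ``if'' direction, suppose $u \geq u' \geq v' \geq v$. Then $X_{u'} \subseteq X_u$ (since $u' \leq u$) and $X^{v'} \subseteq X^v$ (since $v' \geq v$), hence $X_{u'}^{v'} = X_{u'} \cap X^{v'} \subseteq X_u \cap X^v = X_u^v$ immediately by intersecting. I should also note that $u' \geq v'$ guarantees $X_{u'}^{v'}$ is non-empty, so this is a containment of genuine (non-empty, irreducible) subvarieties, matching the framing of the statement.

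The only genuinely delicate point is the ``only if'' direction, where one must extract the two inequalities $u' \leq u$ and $v' \geq v$ from a single containment of the intersections. The clean way to handle this is to observe that $C_{u'}^{v'}$ is dense in $X_{u'}^{v'}$, so $X_{u'}^{v'} \subseteq X_u^v$ implies $C_{u'}^{v'} \subseteq X_u^v \subseteq X_u$, and since $C_{u'}^{v'} \subseteq C_{u'}$ we get $C_{u'} \cap X_u \neq \emptyset$; as $X_u$ is closed and $\overline{C_{u'}} = X_{u'}$, this yields $X_{u'} \subseteq X_u$, i.e. $u' \leq u$, and the argument for $v' \geq v$ is identical with $X^v$ in place of $X_u$. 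I expect this density/closure bookkeeping to be the main obstacle, but it is routine given the facts already assembled in Section~\ref{sect:schubert_defs}; no new input beyond Proposition~\ref{prop:basic_richardson_varieties} and the standard Bruhat-order characterization of Schubert variety containment is needed.
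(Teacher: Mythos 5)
Your proof is correct. The paper itself gives no proof of this proposition --- it is quoted as a standard fact from \cite{Fulton-YoungTableaux}/\cite{Brion-05} --- and your argument (containment of Schubert and opposite Schubert varieties for the ``if'' direction; density of $C_{u'}^{v'}$ together with the decompositions $X_u=\bigsqcup_{w\le u}C_w$ and $X^v=\bigsqcup_{w\ge v}C^w$ for the ``only if'' direction) is exactly the standard one. One small remark: in your final paragraph the phrase ``as $X_u$ is closed'' is not what makes the step work --- closedness alone does not propagate a single point of $C_{u'}\cap X_u$ to all of $C_{u'}$; what you need, and had in fact already invoked earlier, is that $X_u$ is a disjoint union of Schubert cells (equivalently, is $B$-stable), so one point of $C_{u'}$ lying in $X_u$ forces $C_{u'}$ to be one of the cells $C_w$, $w\le u$, hence $u'\le u$, and dually for $v'\ge v$.
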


\section{The weak and full closure orders on symmetric subgroup orbits}\label{sect:weak_order}
Let $G$ be any complex, reductive algebraic group, with $\theta: G \rightarrow G$ an involution, and $K=G^{\theta}$ the corresponding symmetric subgroup.  Let $T \subseteq B$ be a $\theta$-stable maximal torus and Borel subgroup, respectively.

The orbits of $K$ on $G/B$ are partially ordered by closure containment:  $Q_1 \leq Q_2 \Leftrightarrow \overline{Q_1} \subseteq \overline{Q_2}$.  We shall refer to this order as the ``closure order" or the ``full closure order".  A weaker order, which we call the ``weak order" or ``weak closure order", can be defined as follows:  For any simple root $\ga \in \Delta(G,T)$, let $P_{\alpha}$ denote the standard minimal parabolic subgroup of type $\ga$, and let
\[ \pi_{\ga}: G/B \rightarrow G/P_{\ga} \]
be the natural projection.  This is a locally trivial fiber bundle with fiber $P_{\ga}/B \cong \bbP^1$.  Given any $K$-orbit $Q$, one may consider the set $Z_{\ga}(Q):=\pi_{\ga}^{-1}(\pi_{\ga}(Q))$.  Because the map $\pi_{\ga}$ is $K$-equivariant, $Z_{\ga}(Q)$ is stable under $K$.  Assuming $K$ is connected, $Z_{\ga}(Q)$ is also irreducible, and so it contains a dense $K$-orbit.  (In the event that $K$ is disconnected, one notes that the component group of $K$ acts transitively on the irreducible components of $Z_{\ga}(Q)$, and from this it again follows that $Z_{\ga}(Q)$ has a dense $K$-orbit.  We will not need this more general fact.)  We denote this dense orbit by $s_{\ga} \cdot Q$.

If $\dim(\pi_{\ga}(Q)) < \dim(Q)$, then $s_{\ga} \cdot Q = Q$.  However, if $\dim(\pi_{\ga}(Q)) = \dim(Q)$, then $s_{\ga} \cdot Q = Q'$ for some $Q' \neq Q$ with $\dim(Q') = \dim(Q) + 1$.

\begin{definition}
The \textbf{weak closure order} (or simply the \textbf{weak order}) is the partial order on $K$-orbits generated by relations of the form $Q \prec Q' \Leftrightarrow Q' = s_{\ga} \cdot Q \neq Q$.
\end{definition}

Note that we may just as well speak of the weak ordering on orbit \textit{closures}.  Supposing that $Y,Y'$ are the closures of orbits $Q,Q'$, respectively, we say that $Y' = s_{\ga} \cdot Y$ if and only if $Q' = s_{\ga} \cdot Q$, if and only if $Y' = \pi_{\ga}^{-1}(\pi_{\ga}(Y))$.

If $Y' = s_{\ga} \cdot Y \neq Y$, one can consider the degree of the restricted morphism $\pi_{\ga}|_Y$ over its image.  It turns out that this degree is always either $1$ or $2$, as we now explain.  If $Y' = s_{\ga} \cdot Y$, then either:
\begin{enumerate}
	\item $\ga$ is ``complex" for $Q$; or
	\item $\ga$ is ``non-compact imaginary" for $Q$.
\end{enumerate}

The latter case breaks up into two subcases, known as ``type I" and ``type II".  These cases are differentiated by the $K$-orbit structure on the set $Z_{\ga}(Q)$ defined above.  In the ``type I" case, $Z_{\ga}(Q)$ is comprised of the dense orbit $Q'$, the orbit $Q$, and one other orbit $s_{\ga} \times Q$.  Here, $\times$ denotes the ``cross-action" of the Weyl group $W$ on $K \backslash G/B$, defined as
\[ w \times (K \cdot gB) = K \cdot gw^{-1}B. \]
In the ``type II" case, $Z_{\ga}(Q)$ is comprised simply of the dense orbit $Q'$ and the orbit $Q$, and in fact, $s_{\ga} \times Q = Q$.  In particular, if one knows that $\ga$ is non-compact imaginary for $Q$, then whether it is type I or type II depends only on whether $Q$ is fixed by the cross-action of $s_{\ga}$.

With all of this said, the result is as follows:
\begin{prop}
Suppose that $Y' = s_{\ga} \cdot Y \neq Y$.  If $\ga$ is complex or non-compact imaginary type I for $Q$, then $\pi_{\ga}|_Y$ is birational over its image.  If $\ga$ is non-compact imaginary type II for $Q$, then $\pi_{\ga}|_Y$ has degree 2 over its image.
\end{prop}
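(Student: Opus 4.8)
The plan is to analyze the restricted morphism $\pi_\ga|_Y \colon Y \to \pi_\ga(Y)$ fiber by fiber, using the $K$-orbit decomposition of $Z_\ga(Q) = \pi_\ga^{-1}(\pi_\ga(Q))$ recalled above. Since $Y' = s_\ga \cdot Y \neq Y$, we have $\dim(\pi_\ga(Q)) = \dim(Q)$, so $\pi_\ga$ maps $Q$ isomorphically onto its image, and $Q' = s_\ga \cdot Q$ is the dense orbit of $Z_\ga(Q)$ with $\dim Q' = \dim Q + 1$. In particular $\pi_\ga(Y) = \pi_\ga(Y') = \ol{\pi_\ga(Q)}$, which is itself a $K$-orbit closure in $G/P_\ga$ (the image of $Q$), and has dimension $l$ say, while $Y = Y'$ has dimension $l+1$. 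The degree of $\pi_\ga|_Y$ over its image is then computed on the dense locus: it equals the number of points of a generic fiber of $\pi_\ga|_Y$ lying over $\pi_\ga(Q)$, i.e. the number of $K$-orbits in $Z_\ga(Q)$ that are dense in a fiber $\cong \bbP^1$ of $\pi_\ga$, counted with the cardinality of the generic fiber-intersection.

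The key step is the following local picture. Over a point of $\pi_\ga(Q)$, the fiber $P_\ga/B \cong \bbP^1$ meets $Z_\ga(Q)$ in all of $\bbP^1$; the orbit $Q$ meets this $\bbP^1$ in a single point (since $\pi_\ga|_Q$ is an isomorphism onto $\pi_\ga(Q)$), and the dense orbit $Q'$ meets it in the complement of finitely many points. Now I would invoke the standard trichotomy for how a simple root acts: when $\ga$ is complex for $Q$, the extra point(s) of $\bbP^1 \setminus Q'$ lie on a \emph{single} other orbit, and in fact there is exactly one such point, so $\pi_\ga|_{\ol{Q'}}$ restricted to the generic fiber is a bijection $\bbP^1 \setminus \{pt\} \to \pi_\ga(Q)$-fiberwise, giving degree $1$; when $\ga$ is non-compact imaginary type I, $Z_\ga(Q)$ contains the dense orbit $Q'$, the orbit $Q$, and $s_\ga \times Q$ — so in the generic fiber one has $\bbP^1$ minus two points covered by $Q'$, again yielding a birational (degree $1$) restricted map onto the image; and when $\ga$ is non-compact imaginary type II, $Z_\ga(Q)$ consists only of $Q'$ and $Q$ with $s_\ga \times Q = Q$, which forces the single $Q$-point in each $\bbP^1$-fiber to be a \emph{ramification} point of the double cover, so that $\pi_\ga|_Y$ has degree $2$. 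This last point is where the type I/type II distinction bites: type II precisely means that the two ``missing'' points of a generic $\bbP^1$-fiber have collided into one, which is the geometric signature of a degree-$2$ map ramified over the image.

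Concretely, I would set this up by choosing a generic point $gB \in Q$ with image $gP_\ga \in \pi_\ga(Q)$, identifying $\pi_\ga^{-1}(gP_\ga) \cong P_\ga/B$, and tracking which orbits the three or four $K$-orbits of $Z_\ga(Q)$ contribute to this $\bbP^1$ — this is exactly the computation carried out in Richardson–Springer and in Brion's work on $B$-orbits, so I would cite \cite{Brion-05} (and the structure theory of symmetric spaces underlying the complex / non-compact-imaginary type I / type II classification) rather than reproving it. The count of preimages of $gP_\ga$ in $Y$ is then the degree of $\pi_\ga|_Y$: in the complex and type I cases the stabilizer of $gB$ in $P_\ga$ acting on $P_\ga/B \cong \bbP^1$ has two distinct fixed points, so $Y \cap \pi_\ga^{-1}(gP_\ga)$ is that whole $\bbP^1$ and $\pi_\ga|_Y$ has a single point over a generic point of $\pi_\ga(Y)$ — degree $1$; in the type II case the stabilizer fixes a single point with multiplicity, the generic-fiber count of $Y$ over $\pi_\ga(Y)$ is $2$ — degree $2$.

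The main obstacle is making the degree-$2$ assertion in the type II case fully rigorous: one must argue that $\pi_\ga|_Y$ is not merely generically $2$-to-$1$ on some open set but genuinely of degree $2$ as a morphism of varieties (equivalently, that $\bbC(Y)/\bbC(\pi_\ga(Y))$ is a degree-$2$ field extension), and that in the complex/type I cases the map is birational and not just of some odd degree. The cleanest route is to observe that $\pi_\ga|_Y$ factors through $\pi_\ga(Y) \times_{\,G/P_\ga} (G/B)$, whose general fiber over $\pi_\ga(Y)$ is exactly $\bbP^1$; $Y$ is then a $\bbP^1$-subbundle (over a dense open set) of an even smaller space in the type I/complex cases forcing degree $1$, whereas in type II $Y$ surjects onto the base with the generic $\bbP^1$-fiber of $G/B \to G/P_\ga$ meeting $Y$ in a scheme of length $2$ (the collided pair of points), which is exactly degree $2$. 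Once the local fiber picture is pinned down, the global degree statement is immediate from the irreducibility of $Y$ and $\pi_\ga(Y)$.
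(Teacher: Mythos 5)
The paper itself offers no proof of this proposition; it simply refers the reader to Richardson--Springer, so the only thing to assess is whether your rank-one fiber analysis is sound. The overall plan is the right one (restrict to the $\mathbb{P}^1$-fibers of $\pi_\alpha$ over $\pi_\alpha(Q)$ and count the points of $Q$ in a generic fiber), but your geometric picture of the type II case --- the case that carries the whole content of the degree-2 assertion --- is wrong. In type II the generic fiber $\mathbb{P}^1$ over a point of $\pi_\alpha(Q)$ meets $Q$ in two \emph{distinct} points (in the rank-one model these are the two fixed points $0,\infty$ of the relevant torus), and these two points lie in the \emph{same} orbit $Q$ because an element of $K$ interchanges them; that is exactly what $s_\alpha \times Q = Q$ encodes. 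Consequently $\pi_\alpha|_Q \to \pi_\alpha(Q)$ is generically two-to-one and unramified. The degree 2 does not come from the two points ``colliding into one'' or from a ramified double cover, and ``the stabilizer fixes a single point with multiplicity'' is not what happens; what distinguishes type I from type II is not the shape of the fiber but the orbit labels of those two points (in type I they lie in the two different orbits $Q$ and $s_\alpha \times Q$, so $Q$ meets the fiber in one point and $\pi_\alpha|_Y$ is birational).

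There are also internal inconsistencies that would have to be repaired. Your opening claim that $\dim \pi_\alpha(Q) = \dim Q$ implies ``$\pi_\alpha$ maps $Q$ isomorphically onto its image'' is false --- it gives only generic finiteness, and it directly contradicts your own type II conclusion. Likewise ``$Y = Y'$ has dimension $l+1$'' and ``in the complex and type I cases $\ldots$ $Y \cap \pi_\alpha^{-1}(gP_\alpha)$ is that whole $\mathbb{P}^1$'' are incorrect as written: it is $Y' = \pi_\alpha^{-1}(\pi_\alpha(Y))$ that has dimension $l+1$ and contains entire fibers, while $Y$ meets a generic fiber in finitely many points (one in the complex and type I cases, two in type II). Once the fiber counts are stated correctly, the degree statement does follow from irreducibility of $Y$ and $\pi_\alpha(Y)$ exactly as you indicate (or one may simply cite Richardson--Springer, as the paper does); so the gap is the erroneous rank-one picture in type II, not the overall strategy.
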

For a proof of this fact, and for more details on the definitions of complex, non-compact imaginary, etc., the reader may consult \cite{Richardson-Springer-90}.

In \cite{Brion-01}, the poset graph for the weak order on $K \backslash G/B$ is endowed with additional data as follows:  Whenever $Y' = s_{\ga} \cdot Y$ and $\ga$ is non-compact imaginary type II, $Y$ and $Y'$ are connected by a double edge.  In the complex or non-compact imaginary type I cases, simple edges are used.  Each edge, whether simple or double, is also labelled with the appropriate simple root $\ga$, or perhaps an index $i$ if $s_{\ga} = s_{\ga_i}$ for some ordering on the simple roots.

If $w \in W$, with $s_{i_1} \hdots s_{i_k}$ a reduced expression for $w$, set 
\[ w \cdot Y = s_{i_1} \cdot (s_{i_2} \cdot \hdots (s_{i_k} \cdot Y) \hdots ). \]
This is well-defined, independent of the choice of reduced expression for $w$, and defines an action of a certain monoid $M(W)$ on the set of $K$-orbit closures (\cite{Richardson-Springer-90}).  As a set, the monoid $M(W)$ is comprised of elements $m(w)$, one for each $w \in W$.  The multiplication on $M(W)$ is defined inductively by
\[ m(s)m(w) = 
\begin{cases}
	m(sw) & \text{ if $l(sw) > l(w)$}, \\
	m(w) & \text{ otherwise.}
\end{cases} \]

(We will use the notation $w \cdot Y$, as opposed to $m(w) \cdot Y$, to indicate this action, with the understanding that this defines an action of $M(W)$, and not of $W$.)

Suppose that $Y$ is a $K$-orbit closure on $G/B$ of codimension $d$.  Define the following subset of $W$:
\[ W(Y) := \{ w \in W \ \vert \ w \cdot Y = G/B \text{ and } l(w) = d\}. \]
(Note that in this definition, ``$G/B$" refers to the closure of the dense, open orbit.)  Elements of $W(Y)$ are precisely those $w$ such that there is a path connecting $Y$ to the top vertex of the weak order graph, the product of whose edge labels is $w$.  For any $w \in W(Y)$, denote by $D(w)$ the number of double edges in such a path.  (Although there may be more than one, any such path has the same number of double edges, so $D(w)$ \textit{is} well-defined.  See \cite[Lemma 5]{Brion-01}.)

We now recall a theorem of Brion, which we will ultimately use to obtain a positive rule for structure constants $c_{u,v}^w$ for certain types of pairs $(u,v)$.
\begin{theorem}[\cite{Brion-01}]\label{thm:brion}
With notation as above, in $H^*(G/B)$, the fundamental class of $Y$ is expressed in the Schubert basis as follows:
\[ [Y] = \displaystyle\sum_{w \in W(Y)} 2^{D(w)} S_w. \]
\end{theorem}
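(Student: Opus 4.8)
The plan is to reduce the theorem to a statement about the $K$-equivariant degeneration of the orbit closure $Y$ under the projections $\pi_{\ga}$, and then to push-forward and pull-back fundamental classes through the $\mathbb{P}^1$-bundles $\pi_\ga: G/B \to G/P_\ga$ one simple reflection at a time. First I would set up the two elementary operations on classes: for a subvariety $Z \subseteq G/B$, pulling back $\pi_\ga(Z)$ gives $[Z_\ga(Z)] = [\pi_\ga^{-1}(\pi_\ga(Z))]$, and on fundamental classes one has $\pi_\ga^* (\pi_\ga)_* [Z]$ equal to either $[Z]$ (when $\dim \pi_\ga(Z) < \dim Z$, i.e. $Z$ is already $\pi_\ga$-saturated) or $\deg(\pi_\ga|_Z) \cdot [Z_\ga(Z)]$ (when $\pi_\ga|_Z$ is generically finite). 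This is the place where the degree-$1$-or-$2$ dichotomy from the Proposition just above enters: when $Y' = s_\ga \cdot Y \neq Y$, the class $[Y']$ equals $[Y]$ if $\ga$ is complex or non-compact imaginary type I, and equals $2[Y]$... no — rather, $\pi_\ga^*(\pi_\ga)_*[Y]$ equals $[Y']$ in the birational case and $2[Y']$ in the type II case. I would state this as a lemma: $(\pi_\ga)^*(\pi_\ga)_* [Y] = 2^{\epsilon_\ga(Y)} [s_\ga \cdot Y]$ where $\epsilon_\ga(Y) \in \{0,1\}$ records whether $\ga$ is type II for the corresponding orbit $Q$, valid whenever $s_\ga \cdot Y \neq Y$.

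Next I would induct on the codimension $d$ of $Y$. The base case $d = 0$ is $Y = G/B$, where $W(Y) = \{e\}$, $D(e) = 0$, and $[G/B] = S_e = 1$, which is correct. For the inductive step, the key classical input is Chevalley's formula for the action of $\pi_\ga^* (\pi_\ga)_*$ on the Schubert basis: for the Schubert class $S_w = [X^w]$ of codimension $l(w)$, one has $(\pi_\ga)_* S_w = 0$ if $l(s_\ga w) > l(w)$ (equivalently $s_\ga w > w$, the opposite Schubert variety already surjects onto $G/P_\ga$), and $\pi_\ga^*(\pi_\ga)_* S_w = S_{s_\ga w}$ when $l(s_\ga w) < l(w)$. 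Applying $\pi_\ga^*(\pi_\ga)_*$ to a $Y$ of codimension $d$ with $s_\ga \cdot Y = Y' \neq Y$ of codimension $d-1$, the left side is $2^{\epsilon_\ga(Y)}[Y']$, and the right side, using the inductive hypothesis $[Y'] = \sum_{w' \in W(Y')} 2^{D(w')} S_{w'}$ — wait, I must apply $\pi_\ga^*(\pi_\ga)_*$ to $[Y]$, expand $[Y] = \sum_w a_w S_w$ with unknown $a_w$, and match. So: $\pi_\ga^*(\pi_\ga)_*[Y] = \sum_{w \,:\, s_\ga w < w} a_w S_{s_\ga w} = 2^{\epsilon_\ga(Y)} [Y'] = 2^{\epsilon_\ga(Y)}\sum_{w' \in W(Y')} 2^{D(w')}S_{w'}$. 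Comparing coefficients of $S_{w'}$ for each $w'$ with $s_\ga w' > w'$ gives $a_{s_\ga w'} = 2^{\epsilon_\ga(Y) + D(w')}$ for $w' \in W(Y')$; and one shows the remaining $a_w$ (those with $s_\ga w < w$ and $s_\ga w \notin W(Y')$, or with $a_w$ not yet pinned down) vanish by a dimension/support argument — every Schubert class appearing in $[Y]$ has codimension exactly $d$, and the combinatorial fact is that $W(Y) = \{s_\ga w' : w' \in W(Y'),\ s_\ga w' > w'\}$ and $D(s_\ga w') = D(w') + \epsilon_\ga(Y)$, which is exactly the path-in-the-weak-order-graph bookkeeping: a path from $Y$ to the top passes first through $Y'$ via the edge labelled $\ga$ (simple or double according to $\epsilon_\ga(Y)$) and then continues along a path from $Y'$ to the top.

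The main obstacle, I expect, is not the cohomological pull-back/push-forward formalism — that is standard — but rather verifying carefully that the weak order graph really does have the combinatorial structure the bookkeeping requires: namely that every $w \in W(Y)$ factors as $s_\ga \cdot$ (something) compatibly, that $D(w)$ is genuinely well-defined (cited to \cite[Lemma 5]{Brion-01}), and that the Schubert classes $S_{s_\ga w}$ produced on the right are all distinct and account for \emph{all} of $[Y]$ with no leftover terms. Equivalently, one must know that $[Y]$ is supported only on Schubert classes of the right codimension and that the operators $\pi_\ga^*(\pi_\ga)_*$, applied along all paths to the top, suffice to determine $[Y]$ completely — this is where one genuinely uses that the weak order on $K$-orbits is connected upward to the dense orbit and that $Y$ is the closure of a single orbit. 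Once that structural input is in hand, the induction closes and the formula $[Y] = \sum_{w \in W(Y)} 2^{D(w)} S_w$ follows. (I would remark that this is, in essence, Brion's own argument; I am reconstructing it.)
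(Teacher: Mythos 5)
The paper does not prove this statement at all: it is quoted verbatim as a theorem of Brion and used as a black box, so your proposal can only be measured against Brion's own argument in \cite{Brion-01}. Your general strategy (push--pull along the $\mathbb{P}^1$-fibrations $\pi_\alpha$, the degree $1$ versus $2$ dichotomy, induction tied to the weak order, and the bookkeeping $D(w)=D(w')+\epsilon_\alpha$) is indeed the right family of ideas, and your lemma $\pi_\alpha^*(\pi_\alpha)_*[Y]=\deg(\pi_\alpha|_Y)\,[s_\alpha\cdot Y]$ is correct. But the induction as you have organized it does not close, and the step you wave at is precisely where the content lies.

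The gap: fixing \emph{one} raising root $\alpha$ of $Y$ and comparing coefficients in $\pi_\alpha^*(\pi_\alpha)_*[Y]=\deg(\pi_\alpha|_Y)[Y']$ only determines the coefficients $a_w$ of those $S_w$ on which the operator is nonzero, i.e.\ those $w$ with a descent at $\alpha$; all other $S_w$ are annihilated, so this single identity says nothing about their coefficients. Your claim that these leftover coefficients ``vanish by a dimension/support argument'' is false on two counts. First, they are not all zero: $W(Y)$ consists of labels of \emph{all} paths from $Y$ to the dense orbit, and a path may begin with a simple root different from your chosen $\alpha$, so the asserted identity $W(Y)=\{\,$($\alpha$ prepended/appended to)$\,w': w'\in W(Y')\}$ for a single $\alpha$ is simply wrong. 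Second, for the coefficients that genuinely are zero, knowing that every class in $[Y]$ sits in codimension $d$ gives no vanishing whatsoever. The standard repair --- and, in essence, Brion's actual route --- is to compute each coefficient separately as an intersection number $a_w=\int_{G/B}[Y]\cdot[X_w]$ and induct on $\ell(w)$, choosing the simple root $\alpha$ adapted to $w$ (a descent of $w$) rather than to $Y$: if $s_\alpha\cdot Y=Y$, then $X_w$ is $\pi_\alpha$-saturated and the projection formula forces $a_w=0$ (this is the missing vanishing argument, and it also shows such $w$ cannot lie in $W(Y)$); if $s_\alpha\cdot Y=Y'\neq Y$, the projection formula gives $a_w=\deg(\pi_\alpha|_Y)\cdot a'_{ws_\alpha}$, and the weak-order bookkeeping then yields $a_w=2^{D(w)}$ for $w\in W(Y)$ and $0$ otherwise. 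A smaller point: with the paper's conventions the fibration $G/B\to G/P_\alpha$ interacts with Schubert classes and with the monoid action by \emph{right} multiplication ($\pi_\alpha^*(\pi_\alpha)_*S_w=S_{ws_\alpha}$ when $ws_\alpha<w$, and paths through $Y'=s_\alpha\cdot Y$ contribute elements $w's_\alpha$ of $W(Y)$), whereas you wrote $s_\alpha w$ throughout; and the vanishing of $(\pi_\alpha)_*S_w$ in the other case is because $X^w$ is a union of fibres (its image drops dimension), not because it ``surjects onto $G/P_\alpha$''. These are fixable, but the single-root coefficient-matching scheme is not.
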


\section{Example:  $K = GL(p,\C) \times GL(q,\C)$}\label{sect:pq_example}
We now describe in detail our main example, the symmetric pair 
\[ (G,K)=(GL(n,\C),GL(p,\C) \times GL(q,\C)) \]
for any $p,q$ with $p+q=n$.  Let $\theta = \text{int}(I_{p,q})$, where
\[  I_{p,q} := 
\begin{pmatrix}
I_p & 0 \\
0 & -I_q \end{pmatrix}. \]

One checks easily that $K:=G^{\theta} \cong GL(p,\C) \times GL(q,\C)$, embedded diagonally as follows:
\[ K = \left\{
\left[
\begin{array}{cc}
K_{11} & 0 \\
0 & K_{22} \end{array}
\right] \in GL(n,\C)
\ \vert \ 
\begin{array}{c}
K_{11} \in GL(p,\C) \\
K_{22} \in GL(q,\C) \end{array}
\right\} .\]

The finitely many $K$-orbits on $G/B$ are parametrized by \textit{$(p,q)$-clans}, as described in, e.g., \cite{Matsuki-Oshima-90,Yamamoto-97,McGovern-Trapa-09}.  We recall this parametrization in detail.  
\begin{definition}
A \textbf{$(p,q)$-clan} is a string of $n=p+q$ symbols, each of which is a $+$, a $-$, or a natural number.  The string must satisfy the following two properties:
\begin{enumerate}
	\item Every natural number which appears must appear exactly twice in the string.
	\item The difference in the number of plus signs and the number of minus signs in the string must be $p-q$.  (If $q > p$, then there should be $q-p$ more minus signs than plus signs.)
\end{enumerate}
\end{definition}

We only consider such strings up to an equivalence which says, essentially, that it is the positions of matching natural numbers, rather than the actual values of the numbers, which determine the clan.  So, for instance, the clans $(1,2,1,2)$, $(2,1,2,1)$, and $(5,7,5,7)$ are all the same, since they all have matching natural numbers in positions $1$ and $3$, and also in positions $2$ and $4$.  On the other hand, $(1,2,2,1)$ is a different clan, since it has matching natural numbers in positions $1$ and $4$, and in positions $2$ and $3$.

The set of $(p,q)$-clans is in bijection with the set of $K$-orbits on $G/B$.  Moreover, given a clan $\gamma$, the orbit $Q_{\gamma}$ admits an explicit linear algebraic description in terms of the combinatorics of $\gamma$.  Let $E_p = \C \left\langle e_1,\hdots,e_p \right\rangle$ be the span of the first $p$ standard basis vectors, and let $\widetilde{E_q} = \C \left\langle e_{p+1},\hdots,e_n \right\rangle$ be the span of the last $q$ standard basis vectors.  Let $\pi: \C^n \rightarrow E_p$ be the projection onto $E_p$.

For any clan $\gamma=(c_1,\hdots,c_n)$, and for any $i,j$ with $i<j$, define the following quantities:
\begin{enumerate}
	\item $\gamma(i; +) = $ the total number of plus signs and pairs of equal natural numbers occurring among $(c_1,\hdots,c_i)$;
	\item $\gamma(i; -) = $ the total number of minus signs and pairs of equal natural numbers occurring among $(c_1,\hdots,c_i)$; and
	\item $\gamma(i; j) = $ the number of pairs of equal natural numbers $c_s = c_t \in \N$ with $s \leq i < j < t$.
\end{enumerate}

For example, for the $(2,2)$-clan $\gamma=(1,+,1,-)$, we have that 
\begin{enumerate}
	\item $\gamma(i; +) = 0,1,2,2$ for $i=1,2,3,4$;
	\item $\gamma(i; -) = 0,0,1,2$ for $i=1,2,3,4$; and
	\item $\gamma(i;j) = 1,0,0,0,0,0$ for $(i,j) = (1,2), (1,3), (1,4), (2,3), (2,4), (3,4)$.
\end{enumerate}

With all of this notation defined, we have the following theorem on $K$-orbits on $G/B$:
\begin{theorem}\label{thm:orbit_description}
Suppose $p+q=n$.  For a $(p,q)$-clan $\gamma$, define $Q_{\gamma}$ to be the set of all flags $F_{\bullet}$ having the following three properties for all $i,j$ ($i<j$):
\begin{enumerate}
	\item $\dim(F_i \cap E_p) = \gamma(i; +)$
	\item $\dim(F_i \cap \widetilde{E_q}) = \gamma(i; -)$
	\item $\dim(\pi(F_i) + F_j) = j + \gamma(i; j)$
\end{enumerate}

For each $(p,q)$-clan $\gamma$, $Q_{\gamma}$ is nonempty and stable under $K$.  In fact, $Q_{\gamma}$ is a single $K$-orbit on $G/B$.

Conversely, every $K$-orbit on $G/B$ is of the form $Q_{\gamma}$ for some $(p,q)$-clan $\gamma$.  Hence the association $\gamma \mapsto Q_{\gamma}$ defines a bijection between the set of all $(p,q)$-clans and the set of $K$-orbits on $G/B$.
\end{theorem}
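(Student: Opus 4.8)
The plan is to verify three things: (a) each $Q_\gamma$ is nonempty; (b) each $Q_\gamma$ is $K$-stable, hence a union of $K$-orbits; (c) the sets $Q_\gamma$ are pairwise disjoint, exhaust $Fl(\C^n)$, and each is a \emph{single} orbit. Since the number of $(p,q)$-clans is known to equal the number of $K$-orbits (this is the classical parametrization of Matsuki--Oshima, cited in the excerpt), once (a)--(c) minus the single-orbit claim are established, it suffices to show that the number of nonempty $K$-stable $Q_\gamma$ is at least the number of clans and that the $Q_\gamma$ partition $Fl(\C^n)$; irreducibility of each $Q_\gamma$ (or a direct transitivity argument) then forces each to be one orbit. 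Concretely, I would first exhibit, for each clan $\gamma$, an explicit ``model'' flag $F_\bullet^\gamma$ built from the standard basis: place $e_i$ in the flag in position dictated by the $i$-th symbol of $\gamma$, using $e_i \in E_p$ when $c_i = +$, $e_i \in \widetilde{E_q}$ when $c_i = -$, and for a matched pair $c_s = c_t \in \N$ use a vector like $e_s + e_t$ (or $e_s$ and $e_s+e_t$ appropriately) so that $\pi$ of one spans against the other. One then checks the three dimension counts for this model flag directly from the definitions of $\gamma(i;+)$, $\gamma(i;-)$, $\gamma(i;j)$; this is the bookkeeping heart of part (a).

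For $K$-stability (b): the group $K$ preserves the decomposition $\C^n = E_p \oplus \widetilde{E_q}$, hence preserves $E_p$, $\widetilde{E_q}$, and commutes with the projection $\pi$; therefore each of the three conditions $\dim(F_i \cap E_p)$, $\dim(F_i \cap \widetilde{E_q})$, $\dim(\pi(F_i)+F_j)$ is constant on $K$-orbits, so $Q_\gamma$ is $K$-stable. For the single-orbit claim (c), the cleanest route is a direct linear-algebra argument: given any flag $F_\bullet \in Q_\gamma$, produce $g \in K$ carrying $F_\bullet$ to the model flag $F_\bullet^\gamma$, by inductively choosing, at each step $i$, a vector $v_i \in F_i \setminus F_{i-1}$ and normalizing it via $K$; the dimension conditions tell us exactly how $v_i$ sits relative to $E_p$, $\widetilde{E_q}$, and the earlier chosen vectors (in particular, condition (3) controls when $\pi(v_i)$ coincides, up to lower terms, with $\pi$ of a previously chosen vector, which is what a matched natural number records). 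This inductive normalization is where the three numerical invariants get used simultaneously, and carrying it out carefully --- tracking how the ``open'' matched numbers propagate down the flag and get ``closed'' --- is the main obstacle. An alternative, lower-effort route for (c): show $Q_\gamma$ is locally closed and irreducible (it is cut out by rank conditions, and one checks irreducibility via the explicit parametrization of (a)), observe $\bigsqcup_\gamma Q_\gamma = Fl(\C^n)$ by the disjointness in the next paragraph and a dimension/count match, and then each $K$-stable irreducible $Q_\gamma$ containing no proper $K$-stable subvariety of the same dimension must be a single orbit by finiteness of the orbit set.

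Disjointness and exhaustiveness: two clans $\gamma \neq \gamma'$ differ in some symbol, and one checks that they then differ in at least one of the functions $i \mapsto \gamma(i;+)$, $i \mapsto \gamma(i;-)$, or $(i,j)\mapsto \gamma(i;j)$ --- the key point being that a clan is reconstructible from these three functions (the pluses, minuses, and the matching data of the naturals can each be read off), so $Q_\gamma \cap Q_{\gamma'} = \emptyset$. For exhaustiveness, given an arbitrary flag $F_\bullet$, define $c_i$ by: $c_i = +$ if $\dim(F_i \cap E_p) > \dim(F_{i-1}\cap E_p)$, $c_i = -$ if $\dim(F_i \cap \widetilde{E_q}) > \dim(F_{i-1}\cap \widetilde{E_q})$, and otherwise record a new natural number which is matched to the later index $j$ at which $\dim(\pi(F_i)+F_j)$ first fails to jump (i.e.\ where the ``extra'' dimension from $\pi(v_i)$ is absorbed); one verifies this produces a genuine $(p,q)$-clan $\gamma$ with $F_\bullet \in Q_\gamma$, using that the plus/minus counts must differ by $p-q$ because $\dim E_p - \dim \widetilde{E_q} = p-q$. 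Combining: the $Q_\gamma$ are disjoint, $K$-stable, nonempty, cover $Fl(\C^n)$, and number exactly as many as the $K$-orbits, forcing each to be a single orbit and giving the asserted bijection. I expect the model-flag verification and the inductive $K$-normalization in (c) to consume most of the work; everything else is formal.
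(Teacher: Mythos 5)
First, a structural point: the paper does not prove this theorem at all. It is quoted from the literature, and the remark immediately following it says explicitly that the parametrization was announced without proof in \cite{Matsuki-Oshima-90} and that both the proof and the linear-algebraic description of $Q_\gamma$ are due to \cite{Yamamoto-97}. So there is no in-paper argument to compare yours against; your outline (model flag for each clan, $K$-stability from $K$ preserving $E_p$, $\widetilde{E_q}$ and commuting with $\pi$, then transitivity by normalizing an arbitrary flag of $Q_\gamma$ to the model flag inside $K$) is in the spirit of Yamamoto's proof, and the $K$-stability step is fine as you state it.

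As a proof, however, the proposal has concrete gaps. (1) Your exhaustiveness rule is wrong as stated: both $\dim(F_i\cap E_p)$ and $\dim(F_i\cap\widetilde{E_q})$ can jump at the same step (take $p=q=1$, $F_1=\C\langle e_1+e_2\rangle\subset\C^2$; at $i=2$ both jump), and that case is precisely ``second occurrence of a natural number'' (compare the four cases in the proof of Lemma \ref{lem:fs-patterns}); your rule would record a $+$ there and output the string $(1,+)$, which is not a clan. The correct dichotomy is: $+$ iff only the $E_p$-dimension jumps, $-$ iff only the $\widetilde{E_q}$-dimension jumps, first occurrence iff neither jumps, second occurrence iff both jump, with the matching then extracted from condition (3) --- and verifying that the resulting $\gamma$ actually satisfies all three conditions is genuine work you have not done. (2) Your primary framing and your ``lower-effort'' route for single-orbitness lean on the equality of the number of clans with the number of $K$-orbits; that equality is essentially the statement being proved, and \cite{Matsuki-Oshima-90} asserts it without proof, so this is circular. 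Moreover, ``irreducible, $K$-stable, locally closed, finitely many orbits'' only yields a \emph{dense} orbit in $Q_\gamma$, not that $Q_\gamma$ is a single orbit. (3) The two steps you yourself identify as the heart --- checking the three dimension counts for the model flag (which, as written, is mis-specified: for $c_i=+$ with $i>p$ the vector $e_i$ does not lie in $E_p$; one must use fresh basis vectors of $E_p$, resp.\ $\widetilde{E_q}$, not $e_i$ itself) and the inductive construction of $g\in K$ carrying an arbitrary flag of $Q_\gamma$ to the model flag --- are only described, not carried out. So this is a reasonable plan along the standard lines, but it is not yet a proof; for the paper's purposes the correct move is simply to cite \cite{Yamamoto-97}, as the paper does.
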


\begin{remark}
The parametrization of $K$-orbits on $G/B$ by $(p,q)$-clans was described first in \cite{Matsuki-Oshima-90}.  In that paper, no proof of the correctness of the parametrization is given, and the above linear algebraic description of $Q_{\gamma}$ does not appear.  Both the proof and the explicit description of $Q_{\gamma}$ appear in \cite{Yamamoto-97}.
\end{remark}

We also record here for later use a formula for the dimension of the $K$-orbit $Q_{\gamma}$ in terms of the clan $\gamma$.  First define the ``length" of a clan $\gamma$ to be 
\begin{equation}
\label{e:length}
l(\gamma) = \sum_{c_i=c_j \in \bbN, i<j}\left( j-i-
\#\{k \in \bbN \; | \; c_s = c_t = k \text{ for some }
s < i<t<j \} \right ).
\end{equation}
Then
\[
\dim(Q_{\gamma}) = 
d(K) +  l(\gamma),
\]
where $d(K)$ is the dimension of the flag variety for $K$, namely
$\frac12(p(p-1) + q(q-1))$.

Next, we describe the weak order on $K \backslash G/B$ in terms of this parametrization (\cite{Yamamoto-97,McGovern-Trapa-09}).  Let $\frt$ be the Cartan subalgebra of $\text{Lie}(G) = \mathfrak{gl}(n,\C)$ consisting of diagonal matrices.  Let $x_i$ ($i=1,\hdots,n$) be coordinates on $\frt$, with
\[ x_i(\text{diag}(a_1,\hdots,a_n)) = a_i. \]
The simple roots are of the form $\ga_i = x_i - x_{i+1}$ ($i=1,\hdots,n-1$).  The root $\ga_i$ is complex for the orbit $Q_{\gamma}$ corresponding to $\gamma=(c_1,\hdots,c_n)$ if and only if $(c_i,c_{i+1})$ satisfy one of the following:
\begin{enumerate}
	\item $c_i$ is a sign, $c_{i+1}$ is a number, and the mate of $c_{i+1}$ occurs to the right of $c_{i+1}$;
	\item $c_i$ is a number, $c_{i+1}$ is a sign, and the mate of $c_i$ occurs to the left of $c_i$; or
	\item $c_i$ and $c_{i+1}$ are unequal natural numbers, and the mate of $c_i$ occurs to the left of the mate of $c_{i+1}$.
\end{enumerate}

In these cases, the orbit $s_{\ga_i} \cdot Q_{\gamma}$ is $Q_{\gamma'}$, where the clan $\gamma'$ is obtained from $\gamma$ by interchanging $c_i$ and $c_{i+1}$.

As examples of (1), (2), and (3) above, when $p=q=2$, we have
\begin{enumerate}
	\item $s_{\ga_1} \cdot (+,1,-,1) = (1,+,-,1)$; 
	\item $s_{\ga_2} \cdot (1,1,+,-) = (1,+,1,-)$; and
	\item $s_{\ga_2} \cdot (1,1,2,2) = (1,2,1,2)$.
\end{enumerate}

On the other hand, $\ga_i$ is non-compact imaginary for $Q_{\gamma}$ if and only if $(c_i,c_{i+1})$ are opposite signs.  In this case, $s_{\ga_i} \cdot Q_{\gamma} = Q_{\gamma''}$, where $\gamma''$ is obtained from $\gamma$ by replacing the signs in positions $(i,i+1)$ by matching natural numbers.  So, for instance, when $p=q=2$, $s_{\ga_2} \cdot (+,+,-,-) = (+,1,1,-)$.

The cross-action of $w \in S_n$ on any clan $\gamma$ (more correctly, on the orbit $Q_{\gamma}$) is the obvious one, by permutation of the characters of $\gamma$.  In particular, when $\ga_i$ is non-compact imaginary for $\gamma$, the cross-action of $s_{\ga_i}$ interchanges the opposite signs in positions $(i,i+1)$.  Thus for a non-compact imaginary root $\ga_i$, $s_{\ga_i} \times Q_{\gamma} \neq Q_{\gamma}$, and so we see that \textit{all non-compact imaginary roots are of type I}.  This establishes

\begin{prop}\label{prop:single_edges}
In the weak order graph for $K \backslash G/B$, all edges are single.
\end{prop}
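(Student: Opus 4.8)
The plan is to prove Proposition \ref{prop:single_edges} directly from the edge-type dichotomy recalled earlier in Section \ref{sect:weak_order}, together with the explicit combinatorial description of the weak order on $(p,q)$-clans just given. Recall that an edge in the weak order graph joining $Y = \overline{Q_\gamma}$ to $Y' = s_{\ga_i} \cdot Y \neq Y$ is a double edge precisely when $\ga_i$ is non-compact imaginary \emph{type II} for $Q_\gamma$, and that (as noted in Section \ref{sect:weak_order}) once $\ga_i$ is known to be non-compact imaginary for $Q_\gamma$, it is type II if and only if $Q_\gamma$ is fixed by the cross-action of $s_{\ga_i}$. So the entire content of the proposition reduces to the claim: \emph{no non-compact imaginary simple root for any $Q_\gamma$ is fixed by the corresponding cross-action}.

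The key step is therefore to unwind both the ``non-compact imaginary'' condition and the ``cross-action'' in clan-combinatorial terms. From the description above, $\ga_i = x_i - x_{i+1}$ is non-compact imaginary for $Q_\gamma$, $\gamma = (c_1, \hdots, c_n)$, if and only if $c_i$ and $c_{i+1}$ are opposite signs — that is, $\{c_i, c_{i+1}\} = \{+, -\}$. On the other hand, the cross-action of $w \in S_n$ on $Q_\gamma$ is by permutation of the characters of $\gamma$, so the cross-action of $s_{\ga_i}$ simply transposes the entries in positions $i$ and $i+1$. Hence, when $\ga_i$ is non-compact imaginary, $s_{\ga_i} \times Q_\gamma$ corresponds to the clan obtained from $\gamma$ by swapping the opposite signs $c_i$ and $c_{i+1}$; since $+ \neq -$ as clan symbols, this produces a genuinely different clan, and therefore $s_{\ga_i} \times Q_\gamma \neq Q_\gamma$. (One must briefly note that swapping two unequal characters never falls into the equivalence identifying clans, since that equivalence only renames matching natural numbers and does not touch signs at all.) Consequently every non-compact imaginary simple root is type I, and by the dichotomy above, every edge of the weak order graph is simple.

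I do not anticipate a serious obstacle here: once the two recalled combinatorial facts (the characterization of non-compact imaginary $\ga_i$ via opposite signs, and the cross-action as character-permutation) are in hand, the argument is a one-line observation that transposing $+$ and $-$ changes the clan. The only point requiring a moment's care — and the closest thing to an ``obstacle'' — is making explicit that type I versus type II is detected purely by the cross-action fixed-point condition when the root is already known to be non-compact imaginary; but this is exactly what was stated in Section \ref{sect:weak_order}, so it may simply be cited. The proof is thus essentially a bookkeeping assembly of facts already laid out, and I would present it in three sentences: reduce to showing no non-compact imaginary root is cross-action-fixed; observe non-compact imaginary means $\{c_i,c_{i+1}\}=\{+,-\}$ while the cross-action transposes positions $i,i+1$; conclude the resulting clan differs, so the root is type I and the edge is simple.
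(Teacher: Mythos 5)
Your argument is correct and is essentially the same as the paper's: the paper likewise observes that $\ga_i$ is non-compact imaginary exactly when $c_i,c_{i+1}$ are opposite signs, that the cross-action of $s_{\ga_i}$ swaps them to give a different clan, hence all non-compact imaginary roots are type I and every edge is simple. No gaps.
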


\begin{remark}
The previous proposition follows from the discussion of the preceding paragraph, but can also be deduced using \cite[Corollary 2]{Brion-01}.  Indeed, this example is mentioned specifically in the discussion immediately following the statement of that corollary.
\end{remark}

Relative to the parametrization described here, the closed orbits (those minimal in the weak order) are those whose clans consist solely of $p$ plus signs and $q$ minus signs.  The dense open orbit is the one whose clan is $\gamma_0 := (1,2,\hdots,q-1,q,+,\hdots,+,q,q-1,\hdots,2,1)$ ($p-q$ plus signs appearing in the middle) if $p \geq q$, or $(1,2,\hdots,p-1,p,-,\hdots,-,p,p-1,\hdots,2,1)$ ($q-p$ minus signs appearing in the middle) if $q > p$.

With all of these combinatorics in hand, we recast the $M(W)$-action on $K$-orbits as a sequence of operations on $(p,q)$-clans.  Let $\gamma=(c_1,\hdots,c_n)$ be a $(p,q)$-clan.  Given a simple root $s_i = s_{\ga_i}$, consider the following two possible operations on $\gamma$:
\begin{enumerate}[(a)]
	\item Interchange characters $c_i$ and $c_{i+1}$.
	\item Replace characters $c_i$ and $c_{i+1}$ by matching natural numbers.
\end{enumerate}

Then for $i=1,\hdots,n-1$,
\begin{enumerate}
	\item If $c_i$ is a sign, $c_{i+1}$ is a natural number, and the mate of $c_{i+1}$ occurs to the right of $c_{i+1}$, then $s_i \cdot \gamma$ is obtained from $\gamma$ by operation (a).
	\item If $c_i$ is a number, $c_{i+1}$ is a sign, and the mate of $c_i$ occurs to the left of $c_i$, then $s_i \cdot \gamma$ is obtained from $\gamma$ by operation (a).
	\item If $c_i$ and $c_{i+1}$ are unequal natural numbers, with the mate of $c_i$ occurring to the left of the mate for $c_{i+1}$, then $s_i \cdot \gamma$ is obtained from $\gamma$ by operation (a).
	\item If $c_i$ and $c_{i+1}$ are opposite signs, then $s_i \cdot \gamma$ is obtained from $\gamma$ by operation (b).
	\item If none of the above hold, then $s_i \cdot \gamma = \gamma$.
\end{enumerate}

This extends in the obvious way to an action of $M(W)$ on the set of all $(p,q)$-clans.  Note that if $Y_{\gamma} = \overline{Q_{\gamma}}$ is a $K$-orbit closure, the geometric condition that $w \cdot Y_{\gamma} = G/B$ is equivalent to the combinatorial condition that $w \cdot \gamma = \gamma_0$.

\section{More on the combinatorics of $(p,q)$-clans}\label{sect:fs-patterns}
We shall be interested in the defining conditions (1) and (2) of the orbit $Q_{\gamma}$, given in Theorem \ref{thm:orbit_description}.  More specifically, we want to know to what extent a clan $\gamma$ can be determined by the numbers $\gamma(i; +)$ and $\gamma(i; -)$ ($i=1,\hdots,n$) alone.  To facilitate the discussion, we make the following definition:

\begin{definition}
Given a clan $\gamma=(c_1,\hdots,c_n)$, consider the following string of $n$ characters $(d_1,\hdots,d_n)$, each of which is a $+$, a $-$, an `F', or an `S':
\begin{itemize}
	\item If $c_i$ is a $+$ or a $-$, then $d_i = c_i$.
	\item If $c_i$ is the first occurrence of a given natural number, then $d_i$ is an `F'.
	\item If $c_i$ is the second occurrence of a given natural number, then $d_i$ is an `S'. 
\end{itemize}
We refer to the sequence $(d_1,\hdots,d_n)$ as the \textbf{first-second pattern} (or \textbf{FS-pattern}) for $\gamma$, and denote it by $FS(\gamma)$.
\end{definition}

For example, $FS((+,1,-,1)) = (+,F,-,S)$.  Also, 
\[ FS((1,2,3,3,2,1)) = FS((1,2,3,1,2,3)) = (F,F,F,S,S,S). \]
(Note that the clans $(1,2,3,3,2,1)$ and $(1,2,3,1,2,3)$ are different, but have the same FS-pattern.)

It is clear from the association of FS-patterns to $(p,q)$-clans that a sequence $(d_1,\hdots,d_n)$ of $\{+,-,F,S\}$ is the FS-pattern of at least one $(p,q)$-clan (for an appropriate $p,q$) if and only if the following two conditions hold:
\begin{enumerate}
	\item The number of occurrences of F is equal to the number of occurrences of S.
	\item For any $i \in [n]$, among $(d_1,\hdots,d_i)$, the number of occurrences of F is greater than or equal to the number of occurrences of S.
\end{enumerate}

The significance of FS-patterns is explained by the following 
\begin{lemma}\label{lem:fs-patterns}
Given any two $(p,q)$-clans $\gamma_1,\gamma_2$, the following are equivalent:
\begin{enumerate}
	\item $\gamma_1(i; +) = \gamma_2(i; +)$ and $\gamma_1(i; -) = \gamma_2(i; -)$ for all $i=1,\hdots,n$.
	\item $\gamma_1$, $\gamma_2$ have the same FS-pattern.
\end{enumerate}
\end{lemma}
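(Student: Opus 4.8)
The plan is to show that the two numerical sequences $(\gamma(i;+))_{i}$ and $(\gamma(i;-))_{i}$ together encode exactly the same information as the FS-pattern, by examining how these quantities change as $i$ increases by one. First I would set up the bookkeeping: for a clan $\gamma=(c_1,\dots,c_n)$ with FS-pattern $(d_1,\dots,d_n)$, I claim that the increments $\gamma(i;+)-\gamma(i-1;+)$ and $\gamma(i;-)-\gamma(i-1;-)$ depend only on $d_i$ (with the convention $\gamma(0;\pm)=0$). Indeed, by the definitions of $\gamma(i;+)$ and $\gamma(i;-)$: if $d_i=+$ then the plus-count goes up by one and the minus-count is unchanged; if $d_i=-$ then vice versa; if $d_i=F$ (first occurrence of a natural number) then neither count changes, since no \emph{pair} of equal natural numbers has yet been completed at position $i$; and if $d_i=S$ (second occurrence) then a pair has just been completed, so \emph{both} $\gamma(\cdot;+)$ and $\gamma(\cdot;-)$ increase by one. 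This is the crux of the argument and really the only step with content; it is a direct unwinding of the definitions of $\gamma(i;+)$, $\gamma(i;-)$, and the FS-pattern.

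Granting this increment computation, the equivalence follows in both directions. For $(2)\Rightarrow(1)$: if $FS(\gamma_1)=FS(\gamma_2)$, then at each step the increments of $\gamma_1(\cdot;+)$ and $\gamma_2(\cdot;+)$ agree (and likewise for the minus-counts), so by telescoping from $i=0$ we get $\gamma_1(i;+)=\gamma_2(i;+)$ and $\gamma_1(i;-)=\gamma_2(i;-)$ for all $i$. For $(1)\Rightarrow(2)$: conversely, one recovers $d_i$ from the pair of increments. The increment vector $(\gamma(i;+)-\gamma(i-1;+),\ \gamma(i;-)-\gamma(i-1;-))$ takes the value $(1,0)$ precisely when $d_i=+$, the value $(0,1)$ precisely when $d_i=-$, the value $(0,0)$ precisely when $d_i=F$, and the value $(1,1)$ precisely when $d_i=S$. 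These four possibilities are pairwise distinct, so the sequence of increments determines $(d_1,\dots,d_n)$ uniquely; hence $\gamma_1(\cdot;\pm)=\gamma_2(\cdot;\pm)$ forces $FS(\gamma_1)=FS(\gamma_2)$.

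I do not anticipate a genuine obstacle here: the lemma is essentially a reformulation, and the only place one must be slightly careful is the treatment of $F$ versus $S$ in the counts $\gamma(i;+)$ and $\gamma(i;-)$, namely remembering that these count \emph{completed pairs} of equal natural numbers among $(c_1,\dots,c_i)$, so that a first occurrence contributes nothing and a second occurrence contributes to both. Once that point is made precise, everything is a two-line telescoping argument, and it is clean to organize the write-up around the table of the four possible increment vectors and their correspondence with $\{+,-,F,S\}$.
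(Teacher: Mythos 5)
Your proposal is correct and follows essentially the same route as the paper's own proof: both analyze the increment pair $\bigl(\gamma(i;+)-\gamma(i-1;+),\ \gamma(i;-)-\gamma(i-1;-)\bigr)$ and observe that the four possible values $(1,0),(0,1),(0,0),(1,1)$ correspond bijectively to the symbols $+,-,F,S$, so that the counting sequences and the FS-pattern mutually determine one another. The telescoping remark you add to make both implications explicit is a fine (if routine) elaboration of what the paper leaves implicit.
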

\begin{proof}
Let $\gamma=(c_1,\hdots,c_n)$ be any $(p,q)$-clan.  Denote by $a_i$ the number $\gamma(i; +)$ and by $b_i$ the number $\gamma(i; -)$, for $i=1,\hdots,n$.  Defining $a_0 = b_0 = 0$, for any $i=1,\hdots,n$, one of the following four scenarios occurs:
\begin{enumerate}
	\item $a_i = a_{i-1} + 1$, $b_i = b_{i-1}$;
	\item $a_i = a_{i-1}$, $b_i = b_{i-1} + 1$;
	\item $a_i = a_{i-1}$, $b_i = b_{i-1}$;
	\item $a_i = a_{i-1} + 1$, $b_i = b_{i-1} + 1$.
\end{enumerate}

It is clear from the definitions of $\gamma(i; +)$ and $\gamma(i; -)$ that cases (1)-(4) occur precisely in the following situations:
\begin{enumerate}
	\item $c_i$ is a $+$;
	\item $c_i$ is a $-$;
	\item $c_i$ is the first occurrence of some natural number;
	\item $c_i$ is the second occurrence of some natural number.
\end{enumerate}

Thus for any clan $\gamma$, the collection of numbers $\gamma(i; +)$, $\gamma(i; -)$ ($i=1,\hdots,n$) and the FS-pattern $FS(\gamma)$ mutually determine one another, as we have claimed.
\end{proof}

We wish to see next that given any valid FS-pattern, the collection of $K$-orbits whose clans have that FS-pattern contains a unique maximal element in the (full) closure order.  For this, we recall the notion of pattern avoidance used in \cite{McGovern-Trapa-09}.

\begin{definition}
Given a $(p,q)$-clan $\gamma$ and a $(p',q')$-clan $\gamma'$ (with $p' \leq p$ and $q' \leq q$), $\gamma$ is said to \textbf{avoid the pattern} $\gamma'$ if there is no substring of $\gamma$ of length $p'+q'$ which is equal to $\gamma'$ as a clan.
\end{definition}

Among the set of clans with a given FS-pattern, we shall be interested in the unique one which avoids the pattern $(1,2,1,2)$.  This criterion, in plain English, says the following:  Moving from left to right, each time we encounter the second occurrence of a natural number, it is always the mate of the most recent as yet unmated natural number to appear.  For instance, $(1,2,2,3,3,1)$ avoids $(1,2,1,2)$, but $(1,2,2,3,1,3)$ does not, since the $1$ occurring in position 5 is the mate for the $1$ occurring in position $1$, while the $3$ occurring in position 4 is yet unmated.  We see the substring $(1,3,1,3)$ (which is equal, as a clan, to $(1,2,1,2)$) as a result.

It is easy to see that given a prescribed FS-pattern, there is a unique clan $\gamma$ with that FS-pattern which avoids $(1,2,1,2)$.  Indeed, given the FS-pattern $(d_1,\hdots,d_n)$, we write down such a clan $\gamma = (c_1,\hdots,c_n)$ as follows:  First, wherever $d_i = +$ or $d_i = -$, set $c_i = d_i$.  Next, for each $i$ such that $d_i = F$, make $c_i$ a distinct natural number.  (Assuming there are $m$ occurrences of $F$, these may as well be $1,\hdots,m$, in order from left to right.)  Finally, starting from the left and moving to the right, for each $i$ such that $d_i = S$, set $c_i$ to be the natural number equal to the closest as yet unmated natural number to the left.

As an example of the above, take the FS-pattern $(+,-,F,S,F,+,F,S,F,S,S)$.  Using the procedure above, we construct the clan $(+,-,1,1,2,+,3,3,4,4,2)$.  There are 3 other clans with this same FS-pattern, all of which include at least one instance of the pattern $(1,2,1,2)$.  These are 
\[ (+,-,1,1,2,+,3,3,4,2,4),(+,-,1,1,2,+,3,2,4,4,3),(+,-,1,1,2,+,3,2,4,3,4). \]

The key fact is that among the set of all orbits whose clans have a given FS-pattern, the one whose clan avoids $(1,2,1,2)$ is the unique maximal element in the full closure order.  To prove this, we need the following combinatorial fact regarding the full closure order on $K$-orbits:
\begin{lemma}
Suppose $\gamma=(c_1,\hdots,c_n)$ is a $(p,q)$-clan.  Suppose that $c_i$ and $c_j$ ($i<j$) are (not necessarily adjacent) unequal natural numbers such that the mate for $c_i$ occurs to the left of the mate for $c_j$.  Let $\gamma'$ be the clan obtained from $\gamma$ by interchanging $c_i$ and $c_j$.  Then $\overline{Q_{\gamma}} \subseteq \overline{Q_{\gamma'}}$.
\end{lemma}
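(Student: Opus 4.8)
The plan is to prove the statement by induction on $j-i$, reducing the general case to adjacent interchanges, which are governed by the combinatorics of the weak order described in Section~\ref{sect:pq_example}. For the base case $j=i+1$, the symbols $c_i$ and $c_{i+1}$ are adjacent, unequal natural numbers with the mate of $c_i$ to the left of the mate of $c_{i+1}$; this is precisely condition (3) in the list of situations making a simple root $\alpha_i$ complex for $Q_{\gamma}$. Hence $\alpha_i$ is complex for $Q_{\gamma}$, the orbit $s_{\alpha_i}\cdot Q_{\gamma}$ is obtained by interchanging $c_i$ and $c_{i+1}$ --- that is, it equals $Q_{\gamma'}$ --- and $\gamma'\neq\gamma$ since $c_i\neq c_{i+1}$. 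Because $Q_{\gamma}\prec Q_{\gamma'}$ in the weak order forces $\overline{Q_{\gamma}}\subseteq\overline{Q_{\gamma'}}$ (the dense orbit of $\pi_{\alpha_i}^{-1}(\pi_{\alpha_i}(Q_{\gamma}))$ contains $Q_{\gamma}$ in its closure), the base case follows; indeed $\dim Q_{\gamma'}=\dim Q_{\gamma}+1$ here.

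For the inductive step $j-i\ge 2$, the idea is to peel a single adjacent transposition off one end --- moving $c_i$ one position right past $c_{i+1}$, or $c_j$ one position left past $c_{j-1}$ --- in such a way that this step is itself an \emph{upward} move in the closure order (a weak-order move of type (1), (2), or (3)), and then to finish by the induction hypothesis applied to the resulting clan, in which the two distinguished symbols now sit at distance $j-i-1$; transitivity of $\subseteq$ then closes the argument. Throughout one tracks the positions $i,i'$ of the mate of $c_i$ and $j,j'$ of the mate of $c_j$, which causes no trouble since these four positions are pairwise distinct. This works directly whenever $c_{i+1}$ is a sign and $c_i$ is a second occurrence (a type (2) move), or $c_{i+1}$ is a natural number whose mate lies to the right of the mate of $c_i$ (a type (3) move), or one of the mirror-image conditions holds near $c_{j-1}$; in each such case one must also check that the ``return'' step --- sliding back across a sign that was moved out of the way --- is again upward, which follows from the hypothesis $i'<j'$.

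The one configuration not covered by these reductions, and the technical heart of the argument, is the ``crossing'' case, in which the arcs $\{i,i'\}$ and $\{j,j'\}$ interleave (for instance $i'<j'<i<j$) so that \emph{no} adjacent step at either end is upward. Here one cannot lower $j-i$ by a single upward step and must instead route through an auxiliary clan: one first uses upward moves (type (3) interchanges, or a smaller instance of the lemma via the induction hypothesis) to carry the ``obstructing'' occurrence past $c_i$ or $c_j$, converting the situation into one of those already treated, and only then reduces. Exhibiting such a re-routing in every crossing configuration, and checking that each intermediate move is genuinely upward, is the part that requires care; it is nonetheless a finite, elementary case analysis on the relative order of $i,i',j,j'$ and on the nature (sign versus first/second occurrence, with mate to the left or right) of the symbols lying strictly between positions $i$ and $j$.
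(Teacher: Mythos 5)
Your induction on the distance between the interchanged entries, with the adjacent case handled by the weak-order move of type (3), is the same skeleton as the paper's proof; but your inductive step rests on an idea that cannot work. You propose to realize $\overline{Q_{\gamma}}\subseteq\overline{Q_{\gamma'}}$ as a chain of upward steps from $\gamma$ to $\gamma'$ (weak-order moves, or strictly shorter instances of the lemma), concluding by transitivity. In the crossing configuration no such chain need exist. Take $\gamma=(1,+,2,1,-,2)$ and interchange $c_1$ and $c_3$, so $\gamma'=(2,+,1,1,-,2)$. By the length formula (\ref{e:length}), $l(\gamma)=5$ and $l(\gamma')=6$, so $\dim Q_{\gamma'}=\dim Q_{\gamma}+1$ and any nontrivial chain from $\gamma$ up to $\gamma'$ must consist of a single step. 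But the only weak-order covers of $\gamma$ are $(1,2,+,1,-,2)$ and $(1,+,2,-,1,2)$, neither of which equals $\gamma'$ as a clan, and the only interchanges producing $\gamma'$ from $\gamma$ (positions $1,3$ or positions $4,6$) both have distance $2$, so no strictly shorter instance of the lemma applies. Thus the asserted containment is a codimension-one relation of the full closure order that is not in the weak order, and your plan for the crossing case --- ``re-routing through an auxiliary clan by upward moves'' --- is structurally impossible here: transitivity helps only if the auxiliary orbits lie between $Q_{\gamma}$ and $Q_{\gamma'}$, and there is no room for any.

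What is missing is a tool that transfers closure relations \emph{downward} along weak-order edges, and this is exactly what the paper supplies via its Properties A and B, extracted from Theorem 7.11 of Richardson--Springer. In the example above, $s_1\cdot(+,1,2,1,-,2)=\gamma$ and $s_1\cdot(+,2,1,1,-,2)=\gamma'$ are weak edges, the relation $(+,1,2,1,-,2)\le(+,2,1,1,-,2)$ holds by the inductive hypothesis (the two numbers are now adjacent), and Property A then forces $\overline{Q_{\gamma}}\subseteq\overline{Q_{\gamma'}}$ even though no weak path joins $\gamma$ to $\gamma'$. Your argument never invokes anything of this kind, so the crossing case --- which you rightly call the heart of the matter, and which is precisely the case needed later in Proposition \ref{prop:max_fs_orbit} to remove occurrences of $(1,2,1,2)$ --- is a genuine gap, not merely a lengthy but elementary case analysis. (Your outline for the non-crossing $(a,a,b,b)$ configurations, peeling off an adjacent move and checking that the ``return'' slide past a sign is again upward, is plausible, but it does not touch the crossing case.)
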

\begin{proof}
This is stated, but not proven, in \cite{McGovern-Trapa-09}, so we give the proof here.  We will need a couple of properties of the full closure order on the orbits of any symmetric subgroup on the flag variety, which can be found in \cite{Richardson-Springer-90}.

The first property says the following.  Suppose there exists a codimension-1 subdiagram of the full closure order of the following form:
\[
\xymatrixcolsep{1pc} 
\xymatrixrowsep{1pc}
\xymatrix
{& Q_1 \\
Q_2 \ar@{>}[ur]^\alpha 
& & Q_3\\
& Q_4 \ar@{>}[ul] \ar@{>}[ur]_\alpha
}
\]
Then there must be an edge connecting $Q_3$ to $Q_1$:
\[
\xymatrixcolsep{1pc} 
\xymatrixrowsep{1pc}
\xymatrix
{& Q_1 \\
Q_2 \ar@{>}[ur]^\alpha 
& & Q_3\ar@{.>}[ul]\\
& Q_4 \ar@{>}[ul] \ar@{>}[ur]_\alpha
}
\]

This follows from \cite[Theorem 7.11, part (vii)]{Richardson-Springer-90}.  Indeed, as pointed out in \cite{McGovern-Trapa-09}, that result gives an algorithm for generating the full closure order from the weak closure order recursively, starting at the bottom of the weak ordering and applying this rule to add extra edges.  (We will not need this stronger result.)

Note that the lower-left edges in the diagrams above are unlabelled, meaning that they need not be weak order relations.  On the other hand, the upper-left and lower-right edges are required to be weak order relations \textit{with the same associated simple root}.  We also point out that in the general case, any of the edges in the above diagrams may be simple or double.  (We are dealing specifically with a case where all edges are simple, so this will not come up for us, but it is worth noting.)

For brevity, we refer to the property above as \textbf{Property A}.

The second property is as follows.  Suppose we have a codimension-1 subdiagram of the following form:
\[
\xymatrixcolsep{1pc} 
\xymatrixrowsep{1pc}
\xymatrix
{& Q_1 \\
Q_2 \ar@{>}[ur]
& & Q_3 \ar@{>}[ul]_\alpha \\
& Q_4 \ar@{>}[ul]^\alpha
}
\]

Then one of the following two scenarios must occur:
\begin{enumerate}
	\item There is an edge connecting $Q_4$ to $Q_3$:
	\[
	\xymatrixcolsep{1pc} 
	\xymatrixrowsep{1pc}
	\xymatrix
	{& Q_1 \\
	Q_2 \ar@{>}[ur]
	& & Q_3 \ar@{>}[ul]_\alpha \\
	& Q_4 \ar@{>}[ul]^\alpha \ar@{.>}[ur]
	}
	\]
	\item There is another orbit $Q_5$, with $s_{\ga} \cdot Q_5 = Q_2$, and with an edge connecting $Q_5$ to $Q_3$:
	\[
	\xymatrixcolsep{1pc} 
	\xymatrixrowsep{1pc}
	\xymatrix
	{& Q_1 \\
	Q_2 \ar@{>}[ur]
	& & Q_3 \ar@{>}[ul]_\alpha \\
	& Q_4 \ar@{>}[ul]^\alpha & & Q_5 \ar@{>}[ulll]^\alpha \ar@{.>}[ul]
	}
	\]
\end{enumerate}

This property follows from \cite[Theorem 7.11, part (iv)]{Richardson-Springer-90}.  We refer to it as \textbf{Property B}.  Again, we remark that labelled edges are weak order relations, while unlabelled edges need not be.  Additionally, any of the edges may be either simple or double.

We now use these properties to prove the lemma.  There are two pairs of natural numbers in question, which could appear in either the pattern $(a,b,a,b)$ or $(a,a,b,b)$.  (The statement of the lemma does not apply to the pattern $(a,b,b,a)$.)  Suppose the numbers are in the pattern $(a,b,a,b)$.

Note that we have a ``choice" of interchanging the first $a$ and the first $b$ (resulting in $(b,a,a,b)$), or interchanging the second $a$ and the second $b$ (resulting in $(a,b,b,a)$).  But these two interchanges result in the same clan, so we may as well think of interchanging the first $a$ and the first $b$.  Suppose that the first $a$, the first $b$, the second $a$, and the second $b$ occur as characters $c_{i_1},c_{i_2},c_{i_3},c_{i_4}$ ($i_1 < i_2 < i_3 < i_4$), respectively.

Let $\gamma'$ be the clan obtained from $\gamma$ by interchanging the first $a$ and the first $b$.  We wish to prove that the orbit $Q_{\gamma'}$ is above $Q_{\gamma}$ in the full closure order.  The proof goes by induction on $i_2 - i_1$, i.e. on the distance between the numbers to be interchanged.  The case $i_2-i_1=1$ is handled by our discussion of the weak order in Section \ref{sect:pq_example}.  So suppose $i_2-i_1 > 1$.  There are a few cases to consider, depending on the value of $c_{i_1+1}$.

\textit{\textbf{Case 1:}  $c_{i_1+1} = \pm$} --- Then for
\[ \gamma = (\hdots,a,\pm,\hdots,b,\hdots,a,\hdots,b,\hdots); \gamma' = (\hdots,b,\pm,\hdots,a,\hdots,a,\hdots,b,\hdots); \]
\[ \delta = (\hdots,\pm,a,\hdots,b,\hdots,a,\hdots,b,\hdots); \epsilon =  (\hdots,\pm,b,\hdots,a,\hdots,a,\hdots,b,\hdots); \]
we have the following diagram:
\[
\xymatrixcolsep{1pc} 
\xymatrixrowsep{1pc}
\xymatrix
{& Q_{\gamma'} \\
Q_{\epsilon} \ar@{>}[ur]^{i_1}
& & Q_{\gamma} \ar@{.>}[ul] \\
& Q_{\delta} \ar@{>}[ur]_{i_1} \ar@{>}[ul]
}
\]
The weak order edges in the upper-left and lower-right follow from the description of the weak order given in Section \ref{sect:pq_example}.  The lower-left edge is present by the inductive hypothesis.  Thus the upper-right edge must be present by Property A.

If $c_{i_1+1} =: c$ is a natural number, then there are three further cases to consider, depending on the position of the mate for $c$.  Let us say that the mate for the $c$ in position $i_1+1$ is in position $k$.

\textit{\textbf{Case 2:}  $k < i_3$} --- Then for 
\[ \gamma = (\hdots,a,c,\hdots,b,\hdots,a,\hdots,b,\hdots); \gamma' = (\hdots,b,c,\hdots,a,\hdots,a,\hdots,b,\hdots); \]
\[ \delta = (\hdots,c,a,\hdots,b,\hdots,a,\hdots,b,\hdots); \epsilon =  (\hdots,c,b,\hdots,a,\hdots,a,\hdots,b,\hdots); \]
we have the exact same diagram as above, for the exact same reasons.

\textit{\textbf{Case 3:}  $i_3 < k < i_4$} --- Then for 
\[ \gamma = (\hdots,a,c,\hdots,b,\hdots,a,\hdots,c,\hdots,b,\hdots); \gamma' = (\hdots,b,c,\hdots,a,\hdots,a,\hdots,c,\hdots,b,\hdots); \]
\[ \delta = (\hdots,c,a,\hdots,b,\hdots,a,\hdots,c,\hdots,b,\hdots); \epsilon =   (\hdots,c,b,\hdots,a,\hdots,a,\hdots,c,\hdots,b,\hdots); \]
we have the following diagram:
\[
\xymatrixcolsep{1pc} 
\xymatrixrowsep{1pc}
\xymatrix
{Q_{\gamma'} \\
Q_{\epsilon} \ar@{>}[u]_{i_1} \\
Q_{\delta} \ar@{>}[u] \\
Q_{\gamma} \ar@{>}[u]_{i_1}
}
\]
The top and bottom arrows follow by our description of the weak order, while the middle arrow follows by induction.

\textit{\textbf{Case 4:}  $i_4 < k$} --- For this case, let
\[ \gamma = (\hdots,a,c,\hdots,b,\hdots,a,\hdots,b,\hdots,c,\hdots); \gamma' = (\hdots,b,c,\hdots,a,\hdots,a,\hdots,b,\hdots,c,\hdots); \]
\[ \delta = (\hdots,c,a,\hdots,b,\hdots,a,\hdots,b,\hdots,c,\hdots); \epsilon =   (\hdots,c,b,\hdots,a,\hdots,a,\hdots,b,\hdots,c,\hdots); \]
Then we have the following diagram:
\[
\xymatrixcolsep{1pc} 
\xymatrixrowsep{1pc}
\xymatrix
{& Q_{\epsilon} \\
Q_{\delta} \ar@{>}[ur]
& & Q_{\gamma'} \ar@{>}[ul]_{i_1} \\
& Q_{\gamma} \ar@{>}[ul]^{i_1}
}
\]
The lower-left and upper-right arrows follow from our description of the weak order, while the upper-left arrow follows by induction.  Now, according to Property B, either there is an edge connecting $\gamma$ to $\gamma'$, or if not, then there is some other orbit $Q$ such that $s_{i_1} \cdot Q = Q_{\gamma'}$.  However, by our description of the weak order, it is clear that the only way to get to $\gamma'$ by $s_{i_1}$ is to come from $\gamma$.  (The only possibility other than switching $c_{i_1}$ and $c_{i_1+1}$ is to replace opposite signs in these positions by the \textit{same} natural number.  But since we have \textit{different} natural numbers occuring in positions $c_{i_1}$ and $c_{i_1+1}$, this cannot have occurred.)  By Property B, we conclude that there is an edge connecting $\gamma$ to $\gamma'$:
\[
\xymatrixcolsep{1pc} 
\xymatrixrowsep{1pc}
\xymatrix
{& Q_{\epsilon} \\
Q_{\delta} \ar@{>}[ur]
& & Q_{\gamma'} \ar@{>}[ul]_{i_1} \\
& Q_{\gamma} \ar@{>}[ul]^{i_1} \ar@{.>}[ur]
}
\]

This completes the proof in the event that our pair of matching natural numbers occurs in the pattern $(a,b,a,b)$.  If the numbers are in the pattern $(a,a,b,b)$, then one performs a nearly identical case-by-case analysis, with the same result.  Because these further cases are so nearly identical to the ones already treated above, we omit the details.
\end{proof}

Using the previous lemma, we now prove the following
\begin{prop}\label{prop:max_fs_orbit}
Let $(d_1,\hdots,d_n)$ be a valid FS-pattern.  Let $\{\gamma,\gamma_1,\hdots,\gamma_k\}$ be the set of clans having this FS-pattern, with $\gamma$ the unique one avoiding $(1,2,1,2)$.  Let $Q_{\gamma},Q_{\gamma_1},\hdots,Q_{\gamma_k}$ be the corresponding $K$-orbits.  Among these orbits, $Q_{\gamma}$ is the unique maximal one in the full closure order.  That is, for any $i=1,\hdots,k$, $\overline{Q_{\gamma_i}} \subseteq \overline{Q_{\gamma}}$.
\end{prop}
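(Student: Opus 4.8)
The plan is to reduce Proposition \ref{prop:max_fs_orbit} to the previous lemma by showing that any clan $\gamma_i$ with the given FS-pattern can be transformed into the $(1,2,1,2)$-avoiding clan $\gamma$ by a sequence of moves, each of which interchanges two (not necessarily adjacent) unequal natural numbers whose mates appear in the ``increasing'' order required by the lemma, i.e. the mate of the left one comes before the mate of the right one. Each such move moves us strictly up in the full closure order, so stringing them together gives $\overline{Q_{\gamma_i}} \subseteq \overline{Q_\gamma}$. Since the target clan $\gamma$ is the same for every $\gamma_i$ and it itself has the given FS-pattern, this shows $Q_\gamma$ is maximal; uniqueness of the maximal element follows because a poset can have at most one element above all others.

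First I would fix notation: all clans in question share the FS-pattern $(d_1,\dots,d_n)$, so the positions of the $F$'s and $S$'s are common to all of them, and a clan with this pattern is nothing more than a perfect matching of the $F$-positions with the $S$-positions in which every $S$ is matched to an $F$ occurring to its left (a ``non-crossing or crossing'' matching). The $(1,2,1,2)$-avoiding clan $\gamma$ corresponds to the matching in which each $S$ is matched to the nearest currently-unmatched $F$ to its left — equivalently, the unique non-crossing (nested) matching. The key combinatorial observation is: if a matching is not the non-crossing one, then there exist two arcs that ``cross'' in the pattern $(a,b,a,b)$ — that is, positions $p_1 < p_2 < p_3 < p_4$ with $c_{p_1}=c_{p_3}=a$, $c_{p_2}=c_{p_4}=b$. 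Swapping the two first-occurrences $c_{p_1}$ and $c_{p_2}$ (equivalently, swapping the two second occurrences) turns this crossing into the nested pattern $(a,b,b,a)$ and leaves the FS-pattern unchanged; moreover the mate of $c_{p_1}$ (namely $p_3$) occurs to the left of the mate of $c_{p_2}$ (namely $p_4$), so the lemma applies and the swap moves us up in the closure order. I would then argue that iterating this ``uncrossing'' move terminates at the non-crossing matching $\gamma$ — for instance by exhibiting a nonnegative integer statistic on matchings, such as the number of crossing pairs of arcs, which strictly decreases under each move (a standard fact: resolving one crossing of a matching reduces the total number of crossings), or by induction on $l(\gamma_i)$ using the length formula \eqref{e:length}, noting that the uncrossing swap strictly increases the dimension of the orbit hence strictly decreases the ``deficiency'' $l(\gamma) - l(\gamma_i) \geq 0$.

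The main obstacle I anticipate is the termination/convergence step: I need to be sure that greedily uncrossing crossings (a) is always possible when the clan is not yet $\gamma$, and (b) actually converges to $\gamma$ rather than getting stuck at some other $(1,2,1,2)$-avoiding configuration. Point (a) is the genuine content: I must verify that a clan with a given FS-pattern that contains \emph{some} crossing necessarily contains a crossing of the specific type $(a,b,a,b)$ to which the lemma applies — but this is automatic, since any two arcs either nest, are disjoint, or cross, and ``cross'' is exactly the pattern $(a,b,a,b)$. Point (b) follows once (a) is established together with a monovariant: since $\gamma$ is the \emph{unique} clan with this FS-pattern avoiding $(1,2,1,2)$, and the uncrossing process can only terminate at a clan with no crossings, i.e. one avoiding $(1,2,1,2)$, it must terminate at $\gamma$. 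Thus the cleanest write-up is: let $\gamma' \in \{\gamma_1,\dots,\gamma_k\}$; if $\gamma' \neq \gamma$ then $\gamma'$ contains an instance of $(1,2,1,2)$, so we may apply an uncrossing swap to get $\gamma''$ with $\overline{Q_{\gamma'}} \subseteq \overline{Q_{\gamma''}}$, $FS(\gamma'') = FS(\gamma')$, and (by the length formula) $l(\gamma'') > l(\gamma')$; since $l$ is bounded above on the finite set of clans with this FS-pattern, repeating this finitely often reaches $\gamma$, giving $\overline{Q_{\gamma'}} \subseteq \overline{Q_\gamma}$. Finally, uniqueness of the maximal element is immediate, as a maximum of a poset is unique when it exists.
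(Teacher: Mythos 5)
Your proposal is correct and takes essentially the same route as the paper: repeatedly apply the preceding lemma to uncross an occurrence of $(1,2,1,2)$, observe that the FS-pattern is preserved, and force termination at the unique $(1,2,1,2)$-avoiding clan $\gamma$ via the strict increase of $l(\cdot)$ coming from the length/dimension formula (your second option, which is exactly the paper's argument). The one caveat is your first termination option: the claim that resolving a crossing strictly decreases the total number of crossings is precisely the point the paper flags as not obvious (new crossings can appear, as in its example $(1,2,3,1,4,2,3,4)$), so in a final write-up you should either rely on the length-formula argument or actually prove that crossing-count statement rather than cite it as standard.
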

\begin{proof}
Suppose $\gamma_i \neq \gamma$ is given.  Then $\gamma_i$ contains at least one occurrence of the pattern $(1,2,1,2)$.  By the previous lemma, we know that when we interchange two of the unequal numbers, effectively ``removing" this occurrence (changing it to $(1,2,2,1)$), this results in a new orbit higher in the closure order.  Moreover, it is clear that the clan so obtained has the same FS-pattern as $\gamma_i$.  All we need to see, then, is that when we perform such an interchange, the number of occurrences of $(1,2,1,2)$ decreases.  If so, then clearly we can keep removing patterns of $(1,2,1,2)$ iteratively, moving higher in the closure order at each step, until eventually there are no patterns of $(1,2,1,2)$ left.  Because all of the clans obtained throughout this procedure have the same FS-pattern, the end result can only be $\gamma$.

It is not obvious on strictly combinatorial grounds that performing such an interchange must reduce the number of occurrences of $(1,2,1,2)$, for although the pattern in question is eliminated, new ones can be introduced which were not originally present.  For example, consider the clan $(1,2,3,1,4,2,3,4)$.  We see the pattern $(1,3,1,3)$, and interchange the second $1$ with the second $3$ to obtain the clan $(1,2,3,3,4,2,1,4)$.  Note that in the new clan, we see the pattern $(1,4,1,4)$, whereas in the original clan we had the pattern $(1,1,4,4)$.  However, the total number of occurrences of $(1,2,1,2)$ does decrease from $5$ ($(1,2,1,2),(1,3,1,3),(2,3,2,3),(2,4,2,4),(3,4,3,4)$) to $2$ ($(1,4,1,4),(2,4,2,4)$).

We can deduce that the number of occurrences of $(1,2,1,2)$ has to decrease using the previous lemma and the dimension formula (\ref{e:length}) given in Section \ref{sect:pq_example}.  Recall that the \textit{length} of the orbit $Q_{\gamma}$ is given as
\[ l(\gamma) = \sum_{c_i=c_j \in \bbN, i<j}\left( j-i-
\#\{k \in \bbN \; | \; c_s = c_t = k \text{ for some }
s < i<t<j \} \right ), \]
and that the dimension of $Q_{\gamma}$ is this length plus a constant.  An alternative way to write the formula for $l(\gamma)$ is
\[ l(\gamma) = \left[\sum_{c_i=c_j \in \bbN, i<j} (j-i) \right] - T, \]
where $T$ is the total number of occurrences of $(1,2,1,2)$ in $\gamma$.  Since changing an occurrence of $(1,2,1,2)$ to $(1,2,2,1)$ increases dimension, the length must increase.  But the sum in the formula above remains unchanged, since the $2$ is moved closer to its mate, while the $1$ is moved farther from its mate by the same amount.  Thus the only way the length can increase is for $T$ to decrease.
\end{proof}

\begin{example}
Starting with the clan $(1,2,3,1,4,2,3,4)$, we first eliminate the pattern $(1,3,1,3)$, as above, to obtain $(1,2,3,3,4,2,1,4)$.  Next, we eliminate $(1,4,1,4)$ to obtain $(1,2,3,3,4,2,4,1)$.  Finally, we eliminate the lone remaining $(1,2,1,2)$ occurrence, namely $(2,4,2,4)$, to arrive at $(1,2,3,3,4,4,2,1)$.  This last clan has no occurrences of $(1,2,1,2)$, and the corresponding $K$-orbit is maximal in the full closure order among orbits whose clans have the FS-pattern $(F,F,F,S,F,S,S,S)$. 
\end{example}

\section{The coincidence of $K$-orbit closures and Richardson varieties}
Using the combinatorics of the previous section, we now come to the key observation which links $K$-orbit closures and Theorem \ref{thm:brion} to Schubert calculus --- namely, that if $\gamma$ is a $(p,q)$-clan which avoids $(1,2,1,2)$, the closure of the orbit $Q_{\gamma}$ is a Richardson variety.

We start by defining some notation:
\begin{definition}
Given a $(p,q)$-clan $\gamma$ ($n=p+q$), define the following subsets of $[n]$:
\[ \gamma_+ := \{i \in [n] \ \vert \ c_i = + \text{ or } c_i \text{ is the second occurrence of a natural number} \}, \]
and
\[ \gamma_- := [n] - \gamma_+ = \{i \in [n] \ \vert \ c_i = - \text{ or } c_i \text{ is the first occurrence of a natural number} \}. \]

Similarly, define
\[ \widetilde{\gamma_+} := \{i \in [n] \ \vert \ c_i = + \text{ or } c_i \text{ is the first occurrence of a natural number} \}, \]
and 
\[ \widetilde{\gamma_-} := [n] - \widetilde{\gamma_+} = \{i \in [n] \ \vert \ c_i = - \text{ or } c_i \text{ is the second occurrence of a natural number}\}. \]
\end{definition}

\begin{definition}
Let $n=p+q$.  Define two functions $u$ and $v$ from the set of $(p,q)$-clans to $S_n$ as follows:

If $\gamma$ is a $(p,q)$-clan, the permutation $u(\gamma)$ is the one which assigns the numbers $p,p-1,\hdots,1$ to the elements of $\gamma_+$, in descending order, and the numbers $n,n-1,\hdots,p+1$ to the elements of $\gamma_-$, also in descending order.

The permutation $v(\gamma)$ is the one which assigns the numbers $1,\hdots,p$ to the elements of $\widetilde{\gamma_+}$, in ascending order, and the numbers $p+1,\hdots,n$ to $\widetilde{\gamma_-}$, also in ascending order.
\end{definition}

\begin{example}
For the $(3,3)$-clan $\gamma=(+,-,1,2,2,1)$, we have
\begin{itemize}
	\item $\gamma_+ = \{1,5,6\}$
	\item $\gamma_- = \{2,3,4\}$
	\item $u(\gamma) = 365421$ (one-line notation)
	\item $\widetilde{\gamma_+} = \{1,3,4\}$
	\item $\widetilde{\gamma_-} = \{2,5,6\}$
	\item $v(\gamma) = 142356$
\end{itemize}
\end{example}

With this notation defined, we now come to the first of our main results.
\begin{theorem}\label{thm:k-orbits-richardson-varieties}
Suppose $\gamma$ is a $(p,q)$-clan avoiding the pattern $(1,2,1,2)$.  Then $\overline{Q_{\gamma}}$ is the Richardson variety $X_{u(\gamma)}^{v(\gamma)}$.
\end{theorem}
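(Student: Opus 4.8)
The plan is to identify $\overline{Q_\gamma}$ with $X_{u(\gamma)}^{v(\gamma)}$ by showing it is sandwiched between the two, using dimension and irreducibility to force equality. First I would verify that $u(\gamma) \geq v(\gamma)$ in the Bruhat order, so that $X_{u(\gamma)}^{v(\gamma)}$ is a genuine Richardson variety; this is a direct check from Definition \ref{def:bruhat-2}, since $u(\gamma)$ lists the values $p, p-1, \dots, 1$ on $\gamma_+$ and $n, \dots, p+1$ on $\gamma_-$ (largest available values as far left as possible among each block) while $v(\gamma)$ lists $1, \dots, p$ on $\widetilde{\gamma_+}$ and $p+1, \dots, n$ on $\widetilde{\gamma_-}$ (smallest values as far left as possible), and the hypothesis that $\gamma$ avoids $(1,2,1,2)$ controls how $\gamma_+$ compares with $\widetilde{\gamma_+}$: avoidance means matched pairs nest, so the ``second occurrence'' positions (in $\gamma_+$) are pushed as far right as possible relative to the ``first occurrence'' positions (in $\widetilde{\gamma_+}$), which is exactly what makes the two partial-sum conditions compatible.

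Next I would show $Q_\gamma \subseteq C_{u(\gamma)}^{v(\gamma)} = C_{u(\gamma)} \cap C^{v(\gamma)}$ by comparing the linear-algebraic conditions. A flag $F_\bullet \in Q_\gamma$ satisfies $\dim(F_i \cap E_p) = \gamma(i;+)$ and $\dim(F_i \cap \widetilde{E_q}) = \gamma(i;-)$ by Theorem \ref{thm:orbit_description}. I want to convert these into the rank-matrix conditions $\dim(F_i \cap E_j) = r_{u(\gamma)}(i,j)$ and $\dim(F_i \cap \widetilde{E_j}) = r_{w_0 v(\gamma)}(i,j)$ defining $C_{u(\gamma)}$ and $C^{v(\gamma)}$ respectively (Definitions \ref{def:schubert_cells_varieties} and \ref{def:opposite_cells_varieties}). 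The key point is that conditions (1) and (2) of Theorem \ref{thm:orbit_description} only constrain intersections with $E_p$ and $\widetilde{E_q}$, whereas membership in a Schubert cell constrains intersections with \emph{every} $E_j$; but because $\gamma$ avoids $(1,2,1,2)$, Lemma \ref{lem:fs-patterns} and Proposition \ref{prop:max_fs_orbit} tell us $Q_\gamma$ is the \emph{maximal} orbit with its FS-pattern, hence of maximal dimension subject to conditions (1)--(2), and I expect this maximality to translate precisely into the flag being in generic position with respect to the intermediate flags $E_j$ and $\widetilde{E_j}$ — i.e. condition (3) of Theorem \ref{thm:orbit_description} becomes automatic and the only binding constraints are the Schubert-cell rank conditions determined by $u(\gamma)$ and $v(\gamma)$. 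I would make this precise by checking that $u(\gamma)$ is designed so that $r_{u(\gamma)}(i,j)$ interpolates correctly between the prescribed values $\gamma(i;+)$ at $j = p$ (and similarly for $v(\gamma)$ and $\widetilde{E_q}$ at the complementary end), using the ``jump'' description of rank matrices recalled after the definition.

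Having shown $Q_\gamma \subseteq C_{u(\gamma)}^{v(\gamma)}$, I would take closures: $\overline{Q_\gamma} \subseteq \overline{C_{u(\gamma)}^{v(\gamma)}} = X_{u(\gamma)}^{v(\gamma)}$ by Proposition \ref{prop:basic_richardson_varieties}. For the reverse containment it suffices to compare dimensions, since $\overline{Q_\gamma}$ is irreducible (it is an orbit closure) and $X_{u(\gamma)}^{v(\gamma)}$ is irreducible of dimension $l(u(\gamma)) - l(v(\gamma))$, again by Proposition \ref{prop:basic_richardson_varieties}. So the final step is the computation $\dim \overline{Q_\gamma} = d(K) + l(\gamma) = l(u(\gamma)) - l(v(\gamma))$, where the first equality is the dimension formula from Section \ref{sect:pq_example} and the second is a combinatorial identity relating the clan length $l(\gamma)$ (equation \eqref{e:length}), which simplifies for $(1,2,1,2)$-avoiding clans since the correction term $\#\{k \mid \dots\}$ vanishes, to the difference of the two inversion numbers $l(u(\gamma))$ and $l(v(\gamma))$; one computes each $l(u(\gamma))$, $l(v(\gamma))$ explicitly as a sum over positions of $+$, $-$, first-occurrences and second-occurrences and checks the difference telescopes to $d(K) + l(\gamma)$.

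The main obstacle I anticipate is the middle step: proving $Q_\gamma \subseteq C_{u(\gamma)}$ and $Q_\gamma \subseteq C^{v(\gamma)}$, i.e. that the Schubert-cell rank conditions at \emph{all} intermediate $E_j$ (not just $j = p$) hold on $Q_\gamma$, and dually for the opposite cell. This is where the $(1,2,1,2)$-avoidance hypothesis must be used in an essential way — without it the orbit is not maximal in its FS-class and the flag need not be in the generic relative position that the Schubert-cell description demands. I would handle it either by a careful direct induction on $n$ (peeling off the last character of $\gamma$ and tracking how $E_j$, $\widetilde{E_j}$, $u$, $v$ change), or by exhibiting an explicit point of $Q_\gamma$ — a flag built from the standard basis vectors in the order dictated by $\gamma$ — verifying it lies in $C_{u(\gamma)}^{v(\gamma)}$, and then invoking that $Q_\gamma$ is a single $K$-orbit together with $K$-stability of $C_{u(\gamma)}^{v(\gamma)}$ under the relevant group to propagate the conclusion; the explicit-point approach is likely cleaner and is the route I would pursue first.
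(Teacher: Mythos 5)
There is a genuine gap in your middle step, and it is fatal as stated: the cell-level containment $Q_\gamma \subseteq C_{u(\gamma)}^{v(\gamma)}$ is simply false in general. A $K$-orbit is not contained in a single Schubert cell, because the orbit's defining conditions (1)--(2) constrain intersections only with $E_p$ and $\widetilde{E_q}$, and nothing forces every flag in the orbit to be in generic position with respect to the intermediate spaces $E_j$. Concretely, take $(p,q)=(2,1)$ and $\gamma=(1,1,+)$, so $u(\gamma)=321$ and $v(\gamma)=132$. The flag $F_1=\langle e_1+e_3\rangle \subset F_2=\langle e_1,e_3\rangle$ lies in $Q_\gamma$ (its clan invariants are those of $(1,1,+)$), yet $\dim(F_2\cap E_1)=1\neq 0=r_{321}(2,1)$, so it is not in $C_{u(\gamma)}$; meanwhile other flags of the same orbit, e.g.\ $F_1=\langle e_2+e_3\rangle\subset F_2=\langle e_2,e_3\rangle$, do lie in $C_{u(\gamma)}$. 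Your proposed repair --- exhibit one explicit point of $Q_\gamma$ in $C_{u(\gamma)}^{v(\gamma)}$ and propagate by ``$K$-stability of $C_{u(\gamma)}^{v(\gamma)}$'' --- cannot work, because $C_{u(\gamma)}^{v(\gamma)}$ is only $T$-stable: $C_{u(\gamma)}$ is $B$-stable and $C^{v(\gamma)}$ is $B^-$-stable, and $K$ is contained in neither Borel, as the example above shows. What is true, and what you actually need for the inclusion $\overline{Q_\gamma}\subseteq X_{u(\gamma)}^{v(\gamma)}$, is the \emph{closed} (inequality) version: every flag satisfying conditions (1)--(2) lies in some $C_a$ with $a\in W^+:=\{w : r_w(i,p)=\gamma(i;+)\ \forall i\}$ and some $C^b$ with $b\in W^-$, and one must prove that $u(\gamma)$ is the unique maximum of $W^+$ and $v(\gamma)$ the unique minimum of $W^-$ (this is exactly the rank-matrix ``push the jumps to the right'' argument in the paper, and it is where the real combinatorial work lies); note also that $(1,2,1,2)$-avoidance plays no role in this containment --- it is needed only to know $\overline{Q_\gamma}$ exhausts the locus cut out by (1)--(2), via Proposition \ref{prop:max_fs_orbit}.

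Your closing step is also only sketched: the identity $d(K)+l(\gamma)=l(u(\gamma))-l(v(\gamma))$ is asserted, not proved, and while your observation that the correction term in \eqref{e:length} vanishes for $(1,2,1,2)$-avoiding clans is correct, the telescoping computation still has to be carried out. If both repairs are made, your architecture (containment in the Richardson variety plus irreducibility and a dimension count) would give a valid proof, and it is genuinely different from the paper's: the paper avoids any dimension computation by showing that the locus $Y_0$ defined by conditions (1)--(2) alone is the union of all orbits with the given FS-pattern (Lemma \ref{lem:fs-patterns}), has closure $\overline{Q_\gamma}$ by Proposition \ref{prop:max_fs_orbit}, and decomposes into pieces $C_a^b$ ($a\in W^+$, $b\in W^-$) of which $C_{u(\gamma)}^{v(\gamma)}$ is the unique maximal one by Proposition \ref{prop:richardson_closure_order}, so that both closures coincide at once. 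As written, however, your proposal rests on a false containment and an invalid equivariance argument, so it does not yet constitute a proof.
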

\begin{proof}
Consider the following subset of $Fl(\C^n)$:
\[ Y_0 := \{F_{\bullet} \in Fl(\C^n) \ \vert \ \dim(F_i \cap E_p) = \gamma(i;+) \text{ and } \dim(F_i \cap \widetilde{E_q}) = \gamma(i;-) \ \forall i \in [n]\}. \]

Combining the linear algebraic description of $K$-orbits given in Theorem \ref{thm:orbit_description} with Lemma \ref{lem:fs-patterns}, it follows that $Y_0$ is precisely the union $Z$ of all $K$-orbits $Q_{\lambda}$ where $FS(\lambda) = FS(\gamma)$.  Indeed, it follows from Lemma \ref{lem:fs-patterns} that $Z \subset Y_0$, since all clans $\lambda$ with $FS(\lambda) = FS(\gamma)$ have the property that $\lambda(i;+) = \gamma(i;+)$ and $\lambda(i;-) = \gamma(i;-)$ for $i=1,\hdots,n$.  The opposite inclusion follows from Lemma \ref{lem:fs-patterns} combined with Theorem \ref{thm:orbit_description} --- any flag $F_{\bullet} \in Y_0$ must be in \textit{some} $K$-orbit, and that $K$-orbit must be $Q_{\lambda}$ for some $\lambda$ by Theorem \ref{thm:orbit_description}.  By Lemma \ref{lem:fs-patterns} again, $\lambda$ must have the same FS-pattern as $\gamma$, or else $F_{\bullet}$ would satisfy different incidence conditions with $E_p$ and $\widetilde{E_q}$ than those which define $Y_0$.

Since $Y_0 = Z$, it follows from Proposition \ref{prop:max_fs_orbit} that $Y := \overline{Y_0}$ is precisely $\overline{Q_{\gamma}}$.  Thus we have only to show that $Y$ is the Richardson variety $X_{u(\gamma)}^{v(\gamma)}$.

Write $Y_0 = Y_0^+ \cap Y_0^-$, where
\[ Y_0^+ := \{F_{\bullet} \in Fl(\C^n) \ \vert \ \dim(F_i \cap E_p) = \gamma(i;+) \ \forall i \in [n]\}, \]
and
\[ Y_0^- := \{F_{\bullet} \in Fl(\C^n) \ \vert \ \dim(F_i \cap \widetilde{E_q}) = \gamma(i;-) \ \forall i \in [n]\}. \]

Based on the linear algebraic description of type $A$ Schubert cells $C_w$ given in Definition \ref{def:schubert_cells_varieties}, it is immediate that $Y_0^+$ is a union of Schubert cells.  Indeed, define 
\[ W^+ := \{ w \in W \ \vert \ r_w(i,p) = \gamma(i;+) \ \forall i \in [n]\}. \]
Then
\[ Y_0^+ = \bigcup_{w \in W^+} C_w. \]
(One sees this by a very similar argument to the one above regarding the equality of $Z$ and $Y_0$.)

Similarly (cf. Definition \ref{def:opposite_cells_varieties}), $Y_0^-$ is a union of opposite Schubert cells.  Namely,
\[ Y_0^- = \bigcup_{w \in W^-} C^w, \]
where
\[ W^- := \{w \in W \ \vert \ r_{w_0w}(i,q) = \gamma(i;-) \ \forall i \in [n]\}. \]

Thus $Y_0$ is a union of sets $C_a^b$ with $a \in W^+$ and $b \in W^-$.  To show that $Y$ is the Richardson variety $X_{u(\gamma)}^{v(\gamma)}$, it will suffice to show that precisely one of these sets --- specifically $C_{u(\gamma)}^{v(\gamma)}$ --- is maximal in the closure order.  By Proposition \ref{prop:richardson_closure_order}, this is equivalent to showing that $u(\gamma)$ is the unique maximal element of $W^+$, and that $v(\gamma)$ is the unique minimal element of $W^-$.

Consider permutations $w \in W^+$.  They are precisely those permutations whose rank matrices have $p$th column prescribed by the numbers $\gamma(i;+)$:
\[
\begin{pmatrix}
	r_w(1,1) & \hdots & r_w(1,p) & \hdots & r_w(1,n) \\
	\vdots & \vdots & \vdots & \vdots & \vdots \\
	r_w(n,1) & \hdots & r_w(n,p) & \hdots & r_w(n,n)
\end{pmatrix}
=
\begin{pmatrix}
	* & \hdots & \gamma(1;+) & \hdots & * \\
	\vdots & \vdots & \vdots & \vdots & \vdots \\
	* & \hdots & \gamma(n;+) & \hdots & *
\end{pmatrix}
\]

By Definition \ref{def:bruhat-1} of the Bruhat order, to see that this set contains a unique maximal element, it suffices to show that the remaining entries of the rank matrix can be ``filled in" in a way which produces a rank matrix $R$ such that any other rank matrix having the prescribed $p$th column must be greater than or equal than $R$ in every single position.

The way to accomplish this is to place the jumps as far to the right as possible on every single row, starting with the first.  Set $\gamma(0;+) = 0$.  Then for any $i \in [n]$, either $\gamma(i;+) = \gamma(i-1;+)$, or $\gamma(i;+) = \gamma(i-1;+) + 1$.

If $\gamma(i;+) = \gamma(i-1;+)$, the jump in the $i$th row has not yet occurred by the point we reach the $p$th column.  We put it as far to the right as possible, meaning that the first time we encounter such a row, we place the jump in position $n$, the second time we put it in position $n-1$, etc.

If $\gamma(i;+) = \gamma(i-1;+) + 1$, then the jump in the $i$th row \textit{has} occurred by the $p$th column.  Again, we want to place the jump as far to the right as possible, so the first time we encounter such a row, we put the jump at $p$, the second time at $p-1$, etc.

This gives us the rank matrix of a permutation which assigns the numbers $n,n-1,\hdots,p+1$ to those $i$ with $\gamma(i;+) = \gamma(i-1;+)$, in descending order, and which assigns the numbers $p,p-1,\hdots,1$ to those $i$ with $\gamma(i;+) = \gamma(i-1;+) + 1$, also in descending order.  Note that the $i$ of the former type are the elements of $\gamma^-$, while $i$ of the latter type are the elements of $\gamma^+$.  This shows that $u(\gamma)$ is in fact the unique maximal element of $W^+$.

As an example of the above, consider the $(2,2)$-clan $(+,1,-,1)$.  This clan prescribes the second column of a rank matrix as follows:
\[
\begin{pmatrix}
	* & 1 & * & * \\
	* & 1 & * & * \\
	* & 1 & * & * \\
	* & 2 & * & *
\end{pmatrix}
\]

The jump in the first row has already occurred by position $2$, since there is a $1$ there.  So we make the upper-left entry as small as possible by setting it to $0$.  The remaining entries of the first row are then determined:
\[
\begin{pmatrix}
	0 & 1 & 1 & 1 \\
	* & 1 & * & * \\
	* & 1 & * & * \\
	* & 2 & * & *
\end{pmatrix}
\]

Now we move on to the second row.  The jump in the second row has not occurred by the second column, since that entry is still $1$.  So we make the entries of the second row as small as possible by allowing them to be equal to the corresponding entries of the first row for as long as possible.  This forces the second jump all the way to the right:
\[
\begin{pmatrix}
	0 & 1 & 1 & 1 \\
	0 & 1 & 1 & 2 \\
	* & 1 & * & * \\
	* & 2 & * & *
\end{pmatrix}
\]

The third row is similar, although now we must jump in the third column, since there is already a jump in the fourth column:
\[
\begin{pmatrix}
	0 & 1 & 1 & 1 \\
	0 & 1 & 1 & 2 \\
	0 & 1 & 2 & 3 \\
	* & 2 & * & *
\end{pmatrix}
\]

Finally, the jump in the fourth row has no choice but to occur in position $1$:
\[
\begin{pmatrix}
	0 & 1 & 1 & 1 \\
	0 & 1 & 1 & 2 \\
	0 & 1 & 2 & 3 \\
	1 & 2 & 3 & 4
\end{pmatrix}
\]

The resulting rank matrix has jumps in positions $2$, $4$, $3$, and $1$, so it is the rank matrix of the permutation $2431$.  Note that $2$ and $1$ are assigned to the $+$ and the second occurrence of the $1$ in $(+,1,-,1)$ (the coordinates of $\gamma^+$), while $4$ and $3$ are assigned to the $-$ and the first occurrence of the $1$ (the coordinates of $\gamma^-$).

To find the unique minimal element of $W^-$, first find the unique maximal element $w_0w$ whose rank matrix has $q$th column prescribed by the numbers $\gamma(i;-)$.  We do this exactly as above, obtaining a permutation which assigns $n,n-1,\hdots,q+1$ to elements of $\widetilde{\gamma^+}$, and which assigns $q,q-1,\hdots,1$ to the elements of $\widetilde{\gamma^-}$.  Multiplying the resulting permutation by $w_0$, we obtain $v(\gamma)$.
\end{proof}

\section{A positive description of $c_{u,v}^w$ for $(p,q)$-pairs $(u,v)$}
Note that the permutations $u=w_0 \cdot u(\gamma)$, $v=v(\gamma)$ produced in the proof of Theorem \ref{thm:k-orbits-richardson-varieties} have the following properties:
\[ u^{-1}(1) < u^{-1}(2) < \hdots < u^{-1}(q) \text{ and } u^{-1}(q+1) < u^{-1}(q+2) < \hdots < u^{-1}(n); \]
\[ v^{-1}(1) < v^{-1}(2) < \hdots < v^{-1}(p) \text{ and } v^{-1}(p+1) < v^{-1}(p+2) < \hdots < v^{-1}(n). \]
Said another way, the one-line notation for $u$ is a ``shuffle" of $1,\hdots,q$ and $q+1,\hdots,n$ (since $u(\gamma)$ is a shuffle of $p,\hdots,1$ and $n,n-1,\hdots,p+1$), while the one-line notation for $v$ is a shuffle of $1,\hdots,p$ and $p+1,\hdots,n$.  This motivates the following definition:

\begin{definition}
For $p+q=n$, suppose that $u,v \in S_n$ are two permutations having the following properties:
\begin{enumerate}
	\item $u$ is a shuffle of $1,\hdots,q$ and $q+1,\hdots,n$.
	\item $v$ is a shuffle of $1,\hdots,p$ and $p+1,\hdots,n$.
\end{enumerate}

Then we call $(u,v)$ a \textbf{$(p,q)$-pair}.
\end{definition}

Taken together, Theorem \ref{thm:k-orbits-richardson-varieties}, Theorem \ref{thm:brion}, and the combinatorics laid out in Section \ref{sect:pq_example} give a positive (indeed, multiplicity-free) rule for structure constants $c_{u,v}^w$ when $(u,v)$ is a $(p,q)$-pair of the form $(w_0 \cdot u(\gamma),v(\gamma))$, with $\gamma$ a $(p,q)$-clan avoiding the pattern $(1,2,1,2)$.  The hope would be that this rule applies to \textit{any} $c_{u,v}^w$ with $(u,v)$ a $(p,q)$-pair.  Of course, if $w_0u$ and $v$ are not comparable in the Bruhat order, then $c_{u,v}^w$ is automatically zero, so it suffices to consider only those $(p,q)$-pairs $(u,v)$ with $w_0u$ and $v$ comparable.  The next proposition states that, indeed, \textit{any} such $(p,q)$-pair is of the form $(w_0 \cdot u(\gamma),v(\gamma))$ for some $(p,q)$-clan $\gamma$.

\begin{prop}\label{prop:any_pq_pair_is_clan}
Suppose $(u,v)$ is a $(p,q)$-pair, with $w_0u \geq v$.  Then there exists a $(p,q)$-clan $\gamma$ avoiding the pattern $(1,2,1,2)$ such that $w_0u = u(\gamma)$ and $v = v(\gamma)$.
\end{prop}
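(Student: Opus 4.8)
The plan is to reconstruct $\gamma$ directly from the pair $(u,v)$ and then verify that it does the job. First I would unpack the shuffle conditions: since $u$ is a shuffle of $1,\hdots,q$ and $q+1,\hdots,n$, the permutation $u(\gamma) = w_0 u$ must be a shuffle of $p,p\-1,\hdots,1$ and $n,n\-1,\hdots,p\+1$; in particular, the set $S := u(\gamma)^{-1}(\{1,\hdots,p\})$ (the positions carrying the ``small'' values in $u(\gamma)$) is forced. Likewise the set $\widetilde S := v^{-1}(\{1,\hdots,p\})$ is determined by $v$. Comparing with the definitions of $u(\gamma)$ and $v(\gamma)$, the requirement $w_0 u = u(\gamma)$ says exactly that $\gamma_+ = S$, and $v = v(\gamma)$ says exactly that $\widetilde{\gamma_+} = \widetilde S$; moreover, once $\gamma_+$ and $\widetilde{\gamma_+}$ are prescribed, the permutations $u(\gamma)$ and $v(\gamma)$ are completely recovered (the values are filled in in the forced descending/ascending order), so it suffices to build a $(1,2,1,2)$-avoiding clan $\gamma$ with $\gamma_+ = S$ and $\widetilde{\gamma_+} = \widetilde S$.

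Next I would translate ``$\gamma_+ = S$ and $\widetilde{\gamma_+} = \widetilde S$'' into a recipe for the characters $c_i$. By the definitions, $i \in \widetilde{\gamma_+} \setminus \gamma_+$ means $c_i$ is the \emph{first} occurrence of a natural number; $i \in \gamma_+ \setminus \widetilde{\gamma_+}$ means $c_i$ is the \emph{second} occurrence of a natural number; $i \in \gamma_+ \cap \widetilde{\gamma_+}$ means $c_i = +$; and $i \in [n] \setminus (\gamma_+ \cup \widetilde{\gamma_+})$ means $c_i = -$. So the FS-pattern $(d_1,\hdots,d_n)$ of the desired $\gamma$ is forced: $d_i = F$ on $\widetilde S \setminus S$, $d_i = S$ on $S \setminus \widetilde S$, $d_i = +$ on $S \cap \widetilde S$, $d_i = -$ on the complement. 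Given this FS-pattern, Section~\ref{sect:fs-patterns} already tells us there is a \emph{unique} $(p,q)$-clan with that FS-pattern avoiding $(1,2,1,2)$ (obtained by the left-to-right matching procedure), and that is our candidate $\gamma$. It then remains to check two things: (i) this $(d_1,\hdots,d_n)$ is a \emph{valid} FS-pattern, i.e. the number of $F$'s equals the number of $S$'s and every prefix has at least as many $F$'s as $S$'s; and (ii) the resulting clan has the right signature ($\#\{+\} - \#\{-\} = p - q$), so that it is genuinely a $(p,q)$-clan.

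Both (i) and (ii) should come down to the Bruhat hypothesis $w_0 u \ge v$, which is where I expect the real content to be. A counting argument gives (ii) quickly: $\#\{+\} = |S \cap \widetilde S|$, $\#\{-\} = n - |S \cup \widetilde S|$, and since $|S| = |\widetilde S| = p$ we get $\#\{+\} - \#\{-\} = 2p - n + (|S \cap \widetilde S| - |S| - |\widetilde S| + |S \cup \widetilde S|) = 2p - n = p - q$ automatically, \emph{without} needing comparability. For (i), equality of the number of $F$'s and $S$'s is $|\widetilde S \setminus S| = |S \setminus \widetilde S|$, again automatic from $|S| = |\widetilde S|$. The prefix condition is the crux: for each $i$ I need $\#\{k \le i : d_k = F\} \ge \#\{k \le i : d_k = S\}$, i.e. $|\widetilde S \cap [i]| - |S \cap \widetilde S \cap [i]| \ge |S \cap [i]| - |S \cap \widetilde S \cap [i]|$, which simplifies to $|\widetilde S \cap [i]| \ge |S \cap [i]|$ for all $i$. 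I would now recognize this as precisely Definition~\ref{def:bruhat-2}: writing $S = \{u(\gamma)^{-1}(1),\hdots,u(\gamma)^{-1}(p)\}$ and $\widetilde S = \{v^{-1}(1),\hdots,v^{-1}(p)\}$, the inequality $|S \cap [i]| \le |\widetilde S \cap [i]|$ for all $i$ is equivalent (via the ascending-arrangement formulation, applied to $u(\gamma)^{-1}$ and $v^{-1}$, or equivalently via rank-matrix comparison) to $w_0 u(\gamma)^{-1} \cdots$ --- more precisely, it is exactly the statement that $v \le w_0 u(\gamma) = w_0 u$ in the sense of Definition~\ref{def:bruhat-2}, once one accounts for $w_0$ reversing values and for the shuffle structure forcing the within-block orderings. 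The main obstacle, then, is bookkeeping the several applications of $w_0$ and the passage between a permutation, its inverse, and the ``first $p$ positions'' set $S$ correctly, so that the prefix inequality matches the hypothesis $w_0 u \ge v$ on the nose; once that dictionary is set up cleanly, the proof is essentially a one-line invocation of Definition~\ref{def:bruhat-2}.
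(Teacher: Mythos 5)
Your proposal is correct and takes essentially the same route as the paper: the paper packages your sets $S$ and $\widetilde S$ as a ``high-low pattern'' (an H at each position of $\widetilde S \setminus S$, an L at each position of $S \setminus \widetilde S$), proves via Definition~\ref{def:bruhat-2} and the shuffle structure that the same prefix inequality (at least as many H's as L's in every prefix) is equivalent to $v \le w_0u$, and then converts H/L to F/S and takes the unique $(1,2,1,2)$-avoiding clan with that FS-pattern, exactly as you do. Only minor points: the comparability statement you want is $v \le u(\gamma) = w_0 u$ (not $v \le w_0 u(\gamma)$), and the sorted-prefix comparison you defer as ``bookkeeping'' is carried out explicitly (though briefly) in the paper, where the shuffle structure is what reduces elementwise comparison to counting values $\le p$.
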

\begin{proof}
To be clear, the question here concerns whether any pair of comparable permutations, one of which is a shuffle of $p,p-1,\hdots,1$ and $n,n-1,\hdots,p+1$ (this is $w_0u$), and the other of which is a shuffle of $1,\hdots,p$ and $p+1,\hdots,n$ (this is $v$), always arises as $u(\gamma)$ and $v(\gamma)$ for some $(p,q)$-clan $\gamma$.  For the purpose of avoiding any confusion between $u$ and $w_0u$, let us just say that $a$ is a shuffle of $p,p-1,\hdots,1$ and $n,n-1,\hdots,p+1$, and that $b$ is a shuffle of $1,\hdots,p$ and $p+1,\hdots,n$.  Define the \textbf{high-low pattern} for the pair $(a,b)$ to be a sequence $(e_1,\hdots,e_n)$ of $n$ symbols, each of which is a $+$, $-$, H, or L.  For each $i$, the value of $e_i$ is determined as follows:
\begin{enumerate}
	\item If $a(i),b(i) \leq p$, $e_i = +$.
	\item If $a(i),b(i) > p$, $e_i = -$.
	\item If $a(i) > p$ and $b(i) \leq p$, $e_i = H$.
	\item If $a(i) \leq p$ and $b(i) > p$, $e_i = L$.
\end{enumerate}

Comparability of $a$ and $b$ depends on the positioning of H's and L's, in the following way:  For any $i \in [n]$, denote by $(i;H)$ (resp., $(i;L),(i;+),(i;-)$) the number of H's (resp., the number of L's, $+$'s, and $-$'s) occurring among $(e_1,\hdots,e_i)$.  Then for the pair $(a,b)$, we have that $a \geq b$ if and only if $(i;H) \geq (i;L)$ for all $i \in [n]$.

To see this, we use the characterization of the Bruhat order given in Definition \ref{def:bruhat-2}.  Since $a$ is a shuffle of $p,p-1,\hdots,1$ and $n,n-1,\hdots,p+1$, for any $i \in [n]$, when $\{a(1),\hdots,a(i)\}$ is arranged in ascending order, the result is of the form
\[ \{s,s+1,\hdots,p,\vert,t,t+1,\hdots,n\}, \]
for some $s \leq p$ and for some $t > p$.  (The vertical bar marks the point at which the values change from being less than or equal to $p$ to being greater than $p$.)

Similarly, since $b$ is a shuffle of $1,\hdots,p$ and $p+1,\hdots,n$, when $\{b(1),\hdots,b(i)\}$ is arranged in ascending order, the result is
\[ \{1,2,\hdots,h,\vert,p+1,p+2,\hdots,k\}, \]
for some $h \leq p$ and $k > p$.

Comparing these sets, it is clear that the second set is element-wise less than or equal to the first if and only if the second set has at least as many elements which are less than or equal to $p$ as the first set does.  (Said another way, the vertical bar in the second set appears at least as far to the right as the vertical bar in the first set does.)  The number of elements to the left of the vertical bar in the second set is $(i;+) + (i;H)$, while the the number elements to the left of the vertical bar in the first set is $(i;+)+(i;L)$.  And $(i;+)+(i;H) \geq (i;+)+(i;L)$ if and only if $(i;H) \geq (i;L)$.  This proves the claim.

Note that it is obvious that the high-low pattern for \textit{any} pair of permutations whatsoever has an equal number of H's and L's.  Indeed, to say otherwise would say that one permutation had more values from either the set $\{1,\hdots,p\}$ or the set $\{p+1,\hdots,n\}$ than the other, which is clearly absurd.

So the high-low pattern for $(a,b)$ has the following two properties:
\begin{enumerate}
	\item The number of occurrences of H is equal to the number of occurrences of L.
	\item For any $i \in [n]$, among $(e_1,\hdots,e_i)$, the number of occurrences of H is greater than or equal to the number of occurrences of L.
\end{enumerate}

Recall that the properties of a valid FS-pattern (cf. Section \ref{sect:fs-patterns}) are identical to these, with H's replaced by F's, and L's replaced by S's.  So the high-low pattern for the pair $(a,b)$ corresponds to an FS-pattern in the most obvious way possible:  Simply change the H's to F's, and the L's to S's.  Let $\gamma$ be the unique $(p,q)$-clan avoiding $(1,2,1,2)$ with the resulting FS-pattern.  It is now clear by construction that $a = u(\gamma)$ and $b = v(\gamma)$, since $u(\gamma)$ is defined by putting $p,p-1,\hdots,1$ on the $+$'s and S's, and $n,n-1,\hdots,p+1$ on the $-$'s and F's, while $v(\gamma)$ is defined by putting $1,\hdots,p$ on the $+$'s and F's, and $p+1,\hdots,n$ on the $-$'s and S's.
\end{proof}                                               

\begin{definition}\label{def:pq_pair}
If $(u,v)$ is a $(p,q)$-pair, we denote by \textbf{$\gamma(u,v)$} the unique clan $\gamma$ avoiding $(1,2,1,2)$ and having the property that $w_0u=u(\gamma)$, $v = v(\gamma)$.
\end{definition}

\begin{example}
Consider the $(4,4)$-clan $\gamma=(+,1,1,2,3,-,3,2)$.  We know that $Y_{\gamma}$ is the Richardson variety $X_{u(\gamma)}^{v(\gamma)}$, with
\[ u(\gamma) = 48376521, \]
and
\[ v(\gamma) = 12534678. \]

This corresponds to the $(4,4)$-pair $(u,v)$, with $u=w_0 \cdot u(\gamma) = 51623478$, and $v = v(\gamma)$.  Suppose that we were instead handed this $(4,4)$-pair with no prior knowledge of $\gamma$.  Then the proof of Proposition \ref{prop:any_pq_pair_is_clan} says that we should first construct the high-low pattern for $u(\gamma)$, $v(\gamma)$, which is $(+,H,L,H,H,-,L,L)$.  This is then converted to the FS-pattern $(+,F,S,F,F,-,S,S)$.  From this FS-pattern, we recover $\gamma=(+,1,1,2,3,-,3,2)$.  (See Section \ref{sect:fs-patterns}.)
\end{example}

We can now put the various pieces together to arrive at our main result:
\begin{theorem}\label{thm:structure_constants}
Let $p+q=n$.  Let $\gamma_0$ be the clan parametrizing the open dense $GL(p,\C) \times GL(q,\C)$-orbit on $G/B$, as in Section \ref{sect:pq_example}.  Suppose that $(u,v)$ is a $(p,q)$-pair, with $w_0u \geq v$.  Then 
\[ 
c_{u,v}^w = 
\begin{cases}
	1 & \text{ if $l(w) = l(u) + l(v)$ and $w \cdot \gamma(u,v) = \gamma_0$}, \\
	0 & \text{ otherwise.}
\end{cases}
\]
\end{theorem}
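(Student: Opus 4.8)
The plan is to combine the three main ingredients assembled in the paper: the identification of $\overline{Q_{\gamma(u,v)}}$ as a Richardson variety (Theorem~\ref{thm:k-orbits-richardson-varieties}), the expression for the cohomology class of a $K$-orbit closure in terms of the weak order (Theorem~\ref{thm:brion}), and the translation of the $M(W)$-action into operations on clans given at the end of Section~\ref{sect:pq_example}. First I would invoke Proposition~\ref{prop:any_pq_pair_is_clan} to produce the $(p,q)$-clan $\gamma := \gamma(u,v)$ avoiding $(1,2,1,2)$ with $w_0 u = u(\gamma)$ and $v = v(\gamma)$; this is exactly what makes the hypothesis $w_0u \geq v$ meaningful, and it is also what guarantees the Richardson variety $X_{u(\gamma)}^{v(\gamma)}$ is nonempty (Proposition~\ref{prop:basic_richardson_varieties}). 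By Theorem~\ref{thm:k-orbits-richardson-varieties}, $\overline{Q_\gamma} = X_{u(\gamma)}^{v(\gamma)}$, so by equation~(\ref{eqn:classes_of_richardson_varieties}),
\[
[\overline{Q_\gamma}] = [X_{u(\gamma)}^{v(\gamma)}] = S_{w_0 u(\gamma)} \cdot S_{v(\gamma)} = S_u \cdot S_v .
\]
Thus the structure constants $c_{u,v}^w$ are precisely the coefficients of $[\overline{Q_\gamma}]$ in the Schubert basis.

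Next I would compute $[\overline{Q_\gamma}]$ the other way, using Brion's theorem. By Proposition~\ref{prop:single_edges}, every edge in the weak order graph for $GL(p,\C)\times GL(q,\C)$ is single, so $D(w) = 0$ for every relevant $w$, and Theorem~\ref{thm:brion} collapses to
\[
[\overline{Q_\gamma}] = \sum_{w \in W(Y_\gamma)} S_w ,
\]
a multiplicity-free sum, where $Y_\gamma = \overline{Q_\gamma}$ and $W(Y_\gamma) = \{w \in W \mid w \cdot Y_\gamma = G/B,\ l(w) = d\}$ with $d = \operatorname{codim} Y_\gamma$. Comparing the two computations of $[\overline{Q_\gamma}]$ and using that the $S_w$ form a basis, I get $c_{u,v}^w = 1$ if $w \in W(Y_\gamma)$ and $0$ otherwise. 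It remains to rewrite the two conditions defining membership in $W(Y_\gamma)$ in the form stated in the theorem. The codimension condition: by the dimension formula $\dim(Q_\gamma) = d(K) + l(\gamma)$ of Section~\ref{sect:pq_example}, and $\dim(\overline{Q_\gamma}) = \dim(X_{u(\gamma)}^{v(\gamma)}) = l(u(\gamma)) - l(v(\gamma))$ from Proposition~\ref{prop:basic_richardson_varieties}, the codimension is $\dim(G/B) - (l(u(\gamma)) - l(v(\gamma)))$; since $l(u) = l(w_0 u(\gamma)) = \dim(G/B) - l(u(\gamma))$, this equals $l(u) + l(v(\gamma)) = l(u) + l(v)$. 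Hence $l(w) = d$ is exactly $l(w) = l(u) + l(v)$. The orbit-closure condition: the remark at the close of Section~\ref{sect:pq_example} states that $w \cdot Y_\gamma = G/B$ (closure of the dense orbit) is equivalent to $w \cdot \gamma = \gamma_0$ under the $M(W)$-action on clans. Substituting these two equivalent conditions into the description of $W(Y_\gamma)$ yields exactly the stated formula.

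The one point that requires a little care — and which I would single out as the main thing to verify rather than a genuine obstacle — is the bookkeeping that identifies $\operatorname{codim}(\overline{Q_\gamma})$ with $l(u) + l(v)$; one must check that $d(K) = \dim(G/B_K)$ and the Richardson dimension $l(u(\gamma)) - l(v(\gamma))$ are mutually consistent with $l(\gamma)$, i.e. that the two dimension formulas for $\overline{Q_\gamma}$ agree, which is implicitly guaranteed because Theorem~\ref{thm:k-orbits-richardson-varieties} asserts the varieties are literally equal. Everything else is a direct assembly of already-established results: Proposition~\ref{prop:any_pq_pair_is_clan} supplies $\gamma$, Theorem~\ref{thm:k-orbits-richardson-varieties} supplies the geometric identification, equation~(\ref{eqn:classes_of_richardson_varieties}) rewrites the class as $S_u \cdot S_v$, and Theorem~\ref{thm:brion} together with Proposition~\ref{prop:single_edges} evaluates the same class as a multiplicity-free sum of Schubert classes indexed by $W(Y_\gamma)$. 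Matching coefficients and translating $W(Y_\gamma)$ into clan language via the $M(W)$-action completes the proof.
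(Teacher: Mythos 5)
Your proposal is correct and follows essentially the same route as the paper: identify $[\overline{Q_{\gamma(u,v)}}] = S_u\cdot S_v$ via Theorem~\ref{thm:k-orbits-richardson-varieties} and equation~(\ref{eqn:classes_of_richardson_varieties}), evaluate the same class with Theorem~\ref{thm:brion} (multiplicity-free by Proposition~\ref{prop:single_edges}), and translate the conditions $l(w)=\operatorname{codim}(Y)$ and $w\cdot Y = G/B$ into $l(w)=l(u)+l(v)$ and $w\cdot\gamma(u,v)=\gamma_0$. The codimension bookkeeping you flag is handled in the paper by exactly the computation you give, so there is no gap.
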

\begin{proof}
Recall that $\gamma_0$ is the clan $(1,2,\hdots,q-1,q,+,\hdots,+,q,q-1,\hdots,2,1)$ ($p-q$ plus signs in the middle) if $p \geq q$, and $(1,2,\hdots,p-1,p,-,\hdots,-,p,p-1,\hdots,2,1)$ ($q-p$ minus signs) if $q > p$.

Let $Y=\overline{Q_{\gamma(u,v)}}$.  By Theorem \ref{thm:k-orbits-richardson-varieties}, along with equation (\ref{eqn:classes_of_richardson_varieties}) of Subsection \ref{sect:schubert_defs}, we have 
\[ [Y] = [X_{w_0u}^{v}] = S_u \cdot S_v, \]
so the structure constants $c_{u,v}^w$ are identically the coefficients of the various $S_w$ in the Schubert basis expansion of $[Y]$.

The fact that all such coefficients are $0$ or $1$ follows from Proposition \ref{prop:single_edges} and Theorem \ref{thm:brion}.  Note that requiring that $l(w) = l(u) + l(v)$ (hence requiring that $S_w$ live in the only degree it could in order for $c_{u,v}^w$ to be non-zero) is equivalent to requiring that $l(w) = \text{codim}(Y)$, as we do in the definition of $W(Y)$ prior to the statement of Theorem \ref{thm:brion}.  Indeed, the codimension of $Y$ is precisely
\[ \dim(G/B) - \dim(X_{w_0u}^v) = \]
\[ \dim(G/B) - (l(w_0u) - l(v)) = \]
\[ (\dim(G/B) - l(w_0u)) + l(v) = \]
\[  l(u) + l(v). \]
Thus by Theorem \ref{thm:brion}, the only other requirement we must impose on $w$ for $c_{u,v}^w = 1$ is that $w \cdot Y = G/B$.  As was noted at the end of Section \ref{sect:pq_example}, this is equivalent to the combinatorial condition that $w \cdot \gamma(u,v)=\gamma_0$.
\end{proof}

\begin{example}\label{ex:example-1}
Consider the $(3,2)$-pair $(u,v) = (31425,14253)$.  The Schubert product $S_u \cdot S_v$ corresponds to the $(3,2)$-clan $\gamma(u,v) = (+,-,+,-,+)$.  We have $l(u) = l(v) = 3$, and there are $20$ elements of $S_5$ of length $l(u) + l(v) = 6$.  Table 1 of the Appendix shows each of these $20$ elements as words in the simple reflections, the clan obtained from computing the action of each on the clan $\gamma(u,v)$, and the corresponding structure constant $c_{u,v}^w$ specified by Theorem \ref{thm:structure_constants}.

The data in Table 1, obtained using Theorem \ref{thm:structure_constants}, was checked against the output of Maple code written by A. Yong (\cite{Yong-Maple}), and was found to agree with the output of that program.

\end{example}

\section{A Final Question}
Theorem \ref{thm:brion} applies very generally to the class of any spherical subgroup orbit closure in any flag variety.  (Indeed, as mentioned in the introduction, it is applied in the paper \cite{Wyser-11b} to obtain type $CD$ analogues of Theorem \ref{thm:structure_constants}.)  We feel it is natural to wonder whether there are other cases in which Theorem \ref{thm:brion} can be used to obtain information on type $A$ Schubert calculus.  We leave the reader with this question.

\begin{question}
Are there other examples of spherical subgroups of $GL(n,\C)$, the closures of whose orbits coincide with Richardson varieties?  If so, are combinatorial parametrizations of those orbits understood, and is the $M(W)$-action on the orbits understood on the level of that combinatorial parametrization?
\end{question}

\newpage

\appendix
\section{Tables for Examples}
\begin{table}[h]
	\caption{Example \ref{ex:example-1}:  Computing the $(3,2)$ Schubert product $S_{31425} \cdot S_{14253}$}
	\begin{tabular}{|c|c|c|}
		\hline
		Length $6$ Element $w$ & $w \cdot (+,-,+,-,+)$ & $c_{u,v}^w$ \\ \hline
		$[4, 3, 2, 4, 3, 4]$ & $(+,1,2,2,1)$  & $0$ \\ \hline
		$[1, 3, 2, 4, 3, 4]$ & $(1,+,2,2,1)$ & $0$ \\ \hline
		$[1, 4, 3, 2, 3, 4]$ & $(1,+,2,2,1)$ & $0$ \\ \hline
		$[1, 4, 3, 2, 4, 3]$ & $(1,+,2,2,1)$ & $0$ \\ \hline
		$[2, 1, 2, 4, 3, 4]$ & $(1,2,2,+,1)$ & $0$ \\ \hline
		$[2, 1, 3, 2, 3, 4]$ & $(1,2,+,2,1)$ & $1$ \\ \hline
		$[2, 1, 3, 2, 4, 3]$ & $(1,2,+,2,1)$ & $1$ \\ \hline
		$[2, 1, 4, 3, 2, 4]$ & $(1,2,+,2,1)$ & $1$ \\ \hline
		$[2, 1, 4, 3, 2, 3]$ & $(1,2,2,+,1)$ & $0$ \\ \hline
		$[3, 2, 1, 4, 3, 4]$ & $(1,2,+,1,2)$ & $0$ \\ \hline
		$[3, 2, 1, 2, 3, 4]$ & $(1,2,+,2,1)$ & $1$\\ \hline
		$[3, 2, 1, 2, 4, 3]$ & $(1,2,+,2,1)$ & $1$ \\ \hline
		$[3, 2, 1, 3, 2, 4]$ & $(1,2,+,1,2)$ & $0$ \\ \hline
		$[3, 2, 1, 3, 2, 3]$ & $(1,2,2,1,+)$ & $0$ \\ \hline
		$[3, 2, 1, 4, 3, 2]$ & $(1,2,+,2,1)$ & $1$ \\ \hline
		$[4, 3, 2, 1, 3, 4]$ & $(1,2,+,2,1)$ & $1$  \\ \hline
		$[4, 3, 2, 1, 4, 3]$ & $(1,2,+,2,1)$ & $1$ \\ \hline
		$[4, 3, 2, 1, 2, 4]$ & $(1,+,2,2,1)$ & $0$ \\ \hline
		$[4, 3, 2, 1, 2, 3]$ & $(1,2,2,+,1)$ & $0$\\ \hline
		$[4, 3, 2, 1, 3, 2]$ & $(1,2,2,+,1)$ & $0$ \\ \hline
	\end{tabular}
\end{table}

\bibliographystyle{alpha}
\bibliography{sourceDatabase}

\end{document}